\numberwithin{equation}{section}
                        \theoremstyle{plain}
\newcommand{\psdraw}[2]
         {\begin{array}{c} \hspace{-1.3mm}
         \raisebox{-4pt}{\psfig{figure=#1.eps,width=#2}}
         \hspace{-1.9mm}\end{array}}
\numberwithin{equation}{section}
\newtheorem{theorem}{Theorem}[section]
\newtheorem{lemma}[theorem]{Lemma}
\newtheorem{proposition}[theorem]{Proposition}
\newtheorem{thm}{Theorem}
\theoremstyle{definition}
\newtheorem{remark}{Remark}
\def\BC{\mathbb C}
\DeclareMathOperator{\tr}{\mathrm tr}
\def\la{\langle}
\def\ra{\rangle}
\begin{document}

\title[The universal character ring of some families of one-relator groups]{The universal character ring of some families of one-relator groups}

\author[Anh T. Tran]{Anh T. Tran}
\address{Department of Mathematics, The Ohio State University, Columbus, OH 43210, USA}
\email{tran.350@osu.edu}

\thanks{2000 {\em Mathematics Classification:} 57M27.\\
{\em Key words and phrases: character variety, universal character ring, pretzel knot, two-generator one-relator group, palindrome, tunnel number one knot.}}

\begin{abstract}
We study the universal character ring of some families of one-relator groups. As an  application, we calculate the universal character ring of two-generator one-relator groups whose relators are palindromic, and, in particular, of the $(-2,2m+1,2n+1)$-pretzel knot for all integers $m$ and $n$. For the $(-2,3,2n+1)$-pretzel knot, we give a simple proof of a result in \cite{LTaj} on its universal character ring, and an elementary proof of a result in \cite{Ma} on the number of irreducible components of its character variety.
\end{abstract}

\maketitle

\setcounter{section}{-1}

\section{Introduction}

\subsection{The character variety and the universal character ring} The set of representations of a finitely presented group $G$ into $SL_2(\BC)$ is an algebraic set defined over $\BC$, on which
$SL_2(\BC)$ acts by conjugation. The set-theoretic
quotient of the representation space by that action does not
have good topological properties, because two representations with
the same character may belong to different orbits of that action. A better
quotient, the algebro-geometric quotient denoted by $X(G)$
(see \cite{CS, LM}), has the structure of an algebraic
set. There is a bijection between $X(G)$ and the set of all
characters of representations of $G$ into $SL_2(\BC)$, hence
$X(G)$ is usually called the {\em character variety} of $G$. It is determined by the traces of some fixed elements $g_1, \cdots, g_k$ in $G$. More precisely, one can find $g_1, \cdots, g_k$ in $G$ such that for every element $g$ in $G$ there exists a polynomial $P_g$ in $k$ variables such that for any representation $\rho: G \to SL_2(\BC)$ one has $\tr(\rho(g)) = P_g(x_1, \cdots, x_k)$ where $x_j:=\tr(\rho(g_j))$. The {\em universal character ring} of $G$ is  then defined to be the quotient of the polynomial ring $\BC[x_1, \cdots, x_k]$ by the ideal generated by all expressions of the form $\tr(\rho(u))-\tr(\rho(v))$, where $u$ and $v$ are any two words in $g_1, \cdots, g_k$ which are equal in $G$, c.f. \cite{LTaj}. The universal character ring of $G$ is actually independent of the choice of $g_1, \cdots, g_k$. The quotient of the universal character ring of $G$ by its nil-radical is equal to the ring of regular functions on the character variety $X(G)$.

\subsection{Main results} 

Let $F_{a,w}:=\la a,w \ra$ be the free group in 2 letters $a$ and $w$. The character variety of $F_{a,w}$ is isomorphic to $\BC^3$ by the Fricke-Klein-Vogt theorem, see e.g. \cite{LM}. For every word $u$ in $F_{a,w}$ there is a \emph{unique} polynomial $P_u$ in 3 variables such that for any representation $\rho: F_{a,w} \to SL_2(\BC)$ one has $\tr (\rho(u))=P_u (x,y,z)$ where $x:=\tr(\rho(a)),~y:=\tr(\rho(w))$ and $z:=\tr(\rho(aw))$. Thus for every representation $\rho: G \to SL_2(\BC)$, where $G$ is a group generated by $a$ and $w$, we consider $x,y,$ and $z$ as functions of $\rho$. 

For a word $u$ in $F_{a,w}$, we denote by $\overleftarrow{u}$ the word obtained from $u$ by writing the letters in $u$ in reversed order. The word $u$ is called a \emph{palindrome} if $\overleftarrow{u}=u$. 

In this paper we calculate the universal character ring of some families of two-generator one-relator groups as follows.

\begin{thm}
The universal character ring of the group $\la a,w \mid w^n \overleftarrow{r}=r^{-1}w^{n-1}\ra$ is the quotient of the polynomial ring $\BC[x,y,z]$ by the ideal generated by the two polynomials $P_{\overleftarrow{r}}-P_{r^{-1}w^{-1}}$ and $P_{w^n \overleftarrow{r}a}-P_{r^{-1}w^{n-1}a}.$
\label{main1}
\end{thm}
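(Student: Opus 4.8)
The plan is to prove the two inclusions $(Q_1,Q_2)\subseteq I$ and $I\subseteq(Q_1,Q_2)$, where $I\subseteq\BC[x,y,z]$ is the ideal defining the universal character ring of $G=\la a,w\mid w^n\overleftarrow{r}=r^{-1}w^{n-1}\ra$, and where I abbreviate $S:=w^n\overleftarrow{r}$, $V:=r^{-1}w^{n-1}$, $R:=SV^{-1}=w^n\overleftarrow{r}w^{1-n}r$ (the relator word), $Q_1:=P_{\overleftarrow{r}}-P_{r^{-1}w^{-1}}$, and $Q_2:=P_{w^n\overleftarrow{r}a}-P_{r^{-1}w^{n-1}a}$.

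For the inclusion $(Q_1,Q_2)\subseteq I$ it suffices to observe that both $Q_1$ and $Q_2$ are of the form $P_{Sg}-P_{Vg}$: indeed $Q_2=P_{Sa}-P_{Va}$ directly, while $P_{\overleftarrow{r}}=P_{w^{-n}S}=P_{Sw^{-n}}$ and $P_{r^{-1}w^{-1}}=P_{Vw^{-n}}$ by cyclic invariance of the trace, so $Q_1=P_{Sw^{-n}}-P_{Vw^{-n}}$. Since $Sg=Vg$ in $G$ for every word $g$, each $P_{Sg}-P_{Vg}$ lies in $I$, and in particular $Q_1,Q_2\in I$.

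For the reverse inclusion I would first reduce $I$ to finitely many of these expressions. By cyclicity of the trace the defining generators $P_u-P_{u'}$ (for $u=u'$ in $G$) reduce to integer combinations of expressions $P_{Sv}-P_{Vv}$ ($v$ a word); and since every $\rho(v)$ is a $\BC[x,y,z]$--linear combination $\rho(v)=\sum_{g\in\{1,a,w,aw\}}\lambda_g(v)\rho(g)$ with $\lambda_g(v)\in\BC[x,y,z]$ (a standard fact, cf.\ \cite{LTaj}), one gets $P_{Sv}-P_{Vv}=\sum_g\lambda_g(v)\,(P_{Sg}-P_{Vg})$. Thus $I$ is generated by the four polynomials $P_{Sg}-P_{Vg}$, $g\in\{1,a,w,aw\}$, and it remains to place each in $(Q_1,Q_2)$. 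The case $g=a$ is exactly $Q_2$. For $g=1$ and $g=w$ I would use the Cayley--Hamilton recursion $P_{w^{k+1}u}=y\,P_{w^k u}-P_{w^{k-1}u}$, valid for every word $u$, together with the universal trace identities $P_{\overleftarrow{u}}=P_u$, $P_{uv}=P_{vu}$, $P_{u^{-1}}=P_u$. These give $Q_1=P_r-P_{wr}$ and, with $f(k):=P_{w^k r}$, the identities $P_{Sw^k}=f(n+k)$ and $P_{Vw^k}=f(1-n-k)$. Hence modulo $Q_1$ one has $f(0)=f(1)$; since $f(k)-f(1-k)$ again satisfies the recursion and vanishes modulo $Q_1$ at $k=0,1$, induction in both directions gives $f(k)\equiv f(1-k)\pmod{(Q_1)}$ for all $k\in\BZ$. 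Therefore $P_{Sw^k}-P_{Vw^k}=f(n+k)-f(1-(n+k))\in(Q_1)$ for every $k$, and the cases $k=0,1$ dispose of $g=1$ and $g=w$.

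The remaining condition $g=aw$, i.e.\ $P_{Saw}-P_{Vaw}\in(Q_1,Q_2)$ (equivalently $P_{Raw}-P_{aw}\in(Q_1,Q_2)$), is the step I expect to be the main obstacle, and it is here that the palindromic shape of the relator must be used decisively: $R$ is cyclically palindromic, $\overleftarrow{R}=w^{-n}Rw^n$. The idea is to combine this symmetry with the vanishing modulo $Q_1$ of the \emph{entire} family $\{P_{Sw^k}-P_{Vw^k}\}_{k\in\BZ}$ and a further round of $SL_2$ trace identities, so as to rewrite $P_{Saw}-P_{Vaw}$ as a $\BC[x,y,z]$--linear combination of $Q_1$, $Q_2$ and expressions already reduced to $(Q_1,Q_2)$. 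Once all four relator conditions are known to lie in $(Q_1,Q_2)$, the reduction above yields $I\subseteq(Q_1,Q_2)$, and hence the theorem.
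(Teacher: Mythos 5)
Most of what you write is correct and, up to bookkeeping, follows the paper's own route: the containment $(Q_1,Q_2)\subseteq I$ is fine, the reduction of $I$ to the four generators $P_{Sg}-P_{Vg}$, $g\in\{1,a,w,aw\}$, is the cited \cite[Prop 1.1]{pretzel-link} together with the standard expansion of $\rho(v)$ over $\rho(1),\rho(a),\rho(w),\rho(aw)$, and your recursion argument showing $P_{Sw^k}-P_{Vw^k}\in(Q_1)$ for all $k$ is a sound equivalent of the paper's Proposition \ref{1}. But there is a genuine gap at exactly the point you flag: the fourth case ($g=aw$, equivalently $g=wa$) is never proved; you only announce that the palindromic symmetry of the relator plus ``a further round of $SL_2$ trace identities'' should handle it. That case is the mathematical content of the theorem --- it is what allows two generators instead of three --- and the identities you have actually deployed (cyclicity, $P_{u^{-1}}=P_u$, the Cayley--Hamilton recursion in powers of $w$) only compare $S$ and $V$ after multiplication by powers of $w$; they never bring the letter $a$ into the comparison between $\overleftarrow{r}$ and $r$, so the argument as written does not close.

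The missing ingredient is the paper's Proposition \ref{main1prop}: writing $f_n(u)=P_{w^n\overleftarrow{r}u}-P_{r^{-1}w^{n-1}u}$, one has, for every word $u$,
$$f_n(\overleftarrow{u})=f_n(uw^{-1})-P_{uw^{n-1}}\bigl(P_{\overleftarrow{r}}-P_{r^{-1}w^{-1}}\bigr),$$
whose proof uses the reversal identity $P_{uv}=P_{\overleftarrow{u}\,\overleftarrow{v}}$ (Lemma \ref{zero}) applied to both traces, followed by one application of the trace relation \eqref{tr3}. Taking $u=aw$ gives $f_n(wa)=f_n(a)-P_{aw^n}Q_1$, i.e.\ the generator indexed by $wa$ is congruent to $Q_2$ modulo $(Q_1)$; combined with Lemma \ref{4} (which expresses $f_n(aw)+f_n(wa)$ through $f_n(1),f_n(a),f_n(w)$, and is how the paper passes from five generators to four in Proposition \ref{prop}) this also disposes of your generator indexed by $aw$. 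Note that this identity needs only the shape $w^n\overleftarrow{r}=r^{-1}w^{n-1}$ of the relation, not any further palindromicity of $r$ itself; taking $u=w$ in it also recovers your $g=w$ case at a stroke. Until you prove an identity of this kind, the proof is incomplete.
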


\begin{thm}
The universal character ring of the group $\la a,w \mid w^n \overleftarrow{r}=r^{-1}w^{n-2}\ra$ is the quotient of the polynomial ring $\BC[x,y,z]$ by the ideal generated by the two polynomials $P_{\overleftarrow{r}}-P_{r^{-1}w^{-2}}$ and $P_{w^n \overleftarrow{r}aw^{-1}}-P_{r^{-1}w^{n-2}aw^{-1}}.$
\label{main2}
\end{thm}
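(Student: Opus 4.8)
The plan is to exploit the palindromic structure of the relator to reduce the defining ideal of the universal character ring --- which a priori is generated by \emph{all} differences $P_u - P_v$ over pairs of words $u,v$ equal in $G$ --- to the two explicitly named generators. The starting point is the observation that the relator $w^n\overleftarrow{r} = r^{-1}w^{n-2}$ is equivalent to $r\, w^n \overleftarrow{r} = w^{n-2}$, and also, upon reversing every word (which replaces $r$ by $\overleftarrow{r}$, $\overleftarrow{r}$ by $r$, and fixes $w^k$), to $\overleftarrow{r}\, w^n r = w^{n-2}$. Comparing these two reversed forms of the relation produces a consequence involving only the palindrome-friendly quantities, and this is where the first generator $P_{\overleftarrow{r}} - P_{r^{-1}w^{-2}}$ comes from. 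For the trace-identity part, I would use the standard $SL_2$ trace relations (Cayley--Hamilton: $\tr(AB) + \tr(AB^{-1}) = \tr(A)\tr(B)$, and $\tr(A) = \tr(A^{-1})$, together with $P_{u} = P_{\overleftarrow{u}}$, the key fact that reversing a word does not change its trace polynomial --- since $\tr$ of a product in $SL_2$ is invariant under reversal).

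The heart of the argument is then a lemma (presumably proved just before, or to be proved now in parallel with Theorem~\ref{main1}) that says: if $r_1$ and $r_2$ are two words in $F_{a,w}$ and one imposes the single relation $r_1 = r_2$, then the ideal of the universal character ring is generated by just the two differences $P_{r_1} - P_{r_2}$ and $P_{r_1 g} - P_{r_2 g}$ for a suitable single auxiliary word $g$ (here $g = aw^{-1}$). The underlying linear-algebra fact is that in the $2$-dimensional representation, knowing $\tr(\rho(s))$, $\tr(\rho(t))$, and $\tr(\rho(st))$ determines $\tr(\rho(w))$ for every word $w$, and more precisely the trace functional is controlled by a $2\times 2$ block of data; so forcing $\rho(r_1) = \rho(r_2)$ on the universal level amounts to forcing the two matrices to have the same trace \emph{and} the same trace after multiplying by one generic element, which pins down all four matrix entries (using that a $2\times 2$ matrix $M$ with $\tr(M) = \tr(MA) = \tr(MB) = \tr(MAB) = 0$ for generic $A,B$ must be zero). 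I would carry this out by writing $\rho(r_1) - \rho(r_2)$ as a matrix whose four entries, as polynomials in $x,y,z$, lie in the ideal generated by these traces, invoking the fact that $\{I, \rho(a), \rho(w), \rho(aw)\}$ spans the $2\times2$ matrices over the function field.

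So the concrete steps are: (1) rewrite the relation in its two mirror forms and extract the polynomial identity giving $P_{\overleftarrow{r}} = P_{r^{-1}w^{-2}}$ as a consequence of the group relation, hence this difference lies in the defining ideal; (2) prove (or cite the earlier version of) the reduction lemma so that the ideal is generated by $P_{r_1}-P_{r_2}$ and $P_{r_1 g}-P_{r_2 g}$ with $r_1 = w^n\overleftarrow{r}$, $r_2 = r^{-1}w^{n-2}$, $g = aw^{-1}$; (3) massage $P_{r_1} - P_{r_2}$ modulo the first generator $P_{\overleftarrow{r}} - P_{r^{-1}w^{-2}}$ --- using $\tr(AB)+\tr(AB^{-1}) = \tr A\,\tr B$ to pull the $w^n$ and $w^{n-2}$ powers through --- to show it is already in the ideal generated by that first polynomial, so it can be discarded; (4) similarly show $P_{r_1 g} - P_{r_2 g}$ coincides, modulo the first generator, with $P_{w^n\overleftarrow{r}aw^{-1}} - P_{r^{-1}w^{n-2}aw^{-1}}$, which is the second named generator.

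The main obstacle I anticipate is step~(3)/(4): verifying that the ``raw'' generators $P_{r_1}-P_{r_2}$ and $P_{r_1g}-P_{r_2g}$ differ from the claimed clean generators only by a multiple of $P_{\overleftarrow r} - P_{r^{-1}w^{-2}}$ requires a careful bookkeeping of trace identities, and in particular one must check that the ``correction'' coefficients are genuine polynomials in $x,y,z$ and not merely rational functions --- i.e.\ that no spurious denominator (a power of $y^2-4$ or similar, coming from inverting $\rho(w)$) is introduced. Handling the $w^{n-2}$ on the right-hand side (as opposed to the $w^{n-1}$ of Theorem~\ref{main1}) is exactly what forces the auxiliary word to be $aw^{-1}$ rather than $a$, and keeping the parity/exponent arithmetic straight there is the delicate point; everything else is a routine, if lengthy, application of the $SL_2$ trace calculus together with the palindrome identity $P_u = P_{\overleftarrow u}$.
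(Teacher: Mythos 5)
The proposal has a genuine gap at its core, namely step (2). There is no ``reduction lemma'' saying that for an arbitrary single relation $r_1=r_2$ the defining ideal of the universal character ring is generated by the two differences $P_{r_1}-P_{r_2}$ and $P_{r_1g}-P_{r_2g}$ for one auxiliary word $g$. The correct general statement (the paper's Proposition \ref{prop}, from \cite{pretzel-link}, refined by Remark \ref{nx}) gives \emph{four} generators, $P_{r_1}-P_{r_2}$, $P_{r_1a}-P_{r_2a}$, $P_{r_1w}-P_{r_2w}$ and $P_{r_1aw^{-1}}-P_{r_2aw^{-1}}$, precisely because one must test the matrix $\rho(r_1)-\rho(r_2)$ against a spanning set of the $2\times 2$ matrices; your own parenthetical remark (that one needs $\tr(M)=\tr(MA)=\tr(MB)=\tr(MAB)=0$ to force $M=0$) contradicts the two-condition lemma you propose to invoke. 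Because of this, your steps (3) and (4) do not address the real work: with $r_1=w^n\overleftarrow{r}$, $r_2=r^{-1}w^{n-2}$, $g=aw^{-1}$, the difference $P_{r_1g}-P_{r_2g}$ \emph{is} literally the second named generator, so step (4) is vacuous, while the genuinely nontrivial task --- showing that the remaining two generators $f_n(a):=P_{r_1a}-P_{r_2a}$ and $f_n(w):=P_{r_1w}-P_{r_2w}$ lie in the ideal generated by $Q:=P_{\overleftarrow{r}}-P_{r^{-1}w^{-2}}$ and $f_n(aw^{-1})$ --- is not carried out and cannot follow from trace calculus alone for a general relator: it is exactly where the palindromic shape of the relation enters.

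For comparison, the paper closes this gap as follows: using $P_{uv}=P_{\overleftarrow{u}\overleftarrow{v}}$ it proves the identity $f_n(\overleftarrow{u})=f_n(wuw^{-1})$ (Proposition \ref{key}), and using the trace identity $P_{ucd}+P_{udc}=-P_{cd^{-1}}P_u+P_cP_{ud}+P_dP_{uc}$ it proves $f_n(u)+f_n(wuw^{-1})=-P_{uw^{-2}}f_n(1)+P_{uw^{-1}}f_n(w)+P_wf_n(uw^{-1})$ (Proposition \ref{them}); taking $u=a$ (a palindrome) yields $2f_n(a)=-P_{aw^{-2}}f_n(1)+P_{aw^{-1}}f_n(w)+P_wf_n(aw^{-1})$. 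A separate Chebyshev computation (Proposition \ref{2}) gives $f_n(1)=-S_{n-2}(y)\,Q$ and $f_n(w)=-S_{n-1}(y)\,Q$, with \emph{polynomial} coefficients, so no denominators such as $y^2-4$ ever appear --- the concern you raise at the end is resolved by this recursion, not by ad hoc bookkeeping. Your step (1) (that $Q$ lies in the ideal) is fine --- in the paper it is immediate since $\overleftarrow{r}$ and $r^{-1}w^{-2}$ are conjugate by $w^n$ in the group --- but without the four-to-two reduction mechanism above, the proposal does not prove the theorem.
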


As an application of Theorem \ref{main1}, we immediately obtain a simple proof of the following result in \cite{LTaj} on the universal character ring of the $(-2,3,2n+1)$-pretzel knot.

\begin{thm} [\cite{LTaj}] The fundamental group of the $(-2,3,2n+1)$-pretzel knot is isomorphic to the group $\la a,w \mid w^n \overleftarrow{r}=r^{-1}w^{n-1}\ra$ where $r:=a^{-1}w^{-1}a^{-1}wa.$ Hence its universal character ring is the quotient of the polynomial ring $\BC[x,y,z]$ by the ideal generated by the two polynomials $Q:=P_{\overleftarrow{r}}-P_{r^{-1}w^{-1}}$ and $R_n:=P_{w^n \overleftarrow{r}a}-P_{r^{-1}w^{n-1}a}.$ Explicitly,
\begin{eqnarray*}
Q &=& x - x y + (-3 + x^2 + y^2) z - x y z^2 + z^3,\\
R_n &=& S_{n-2}(y)+S_{n-3}(y)-S_{n-4}(y)-S_{n-5}(y)-S_{n-2}(y) \, x^2 \\
   && + \, \big( S_{n-1}(y)+S_{n-3}(y)+S_{n-4}(y) \big) \, xz-\big( S_{n-2}(y)+S_{n-3}(y) \big)\,z^2, \nonumber
\end{eqnarray*}
where $S_k(y)$'s are the Chebychev polynomials defined by $S_0(y)=1,~S_1(y)=y$ and $S_{k+1}(y)=yS_{k}(y)-S_{k-1}(y)$ for all integers $k$.
\label{aj}
\end{thm}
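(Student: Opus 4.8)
The proof has two essentially independent parts. The first is the group isomorphism, which I would handle by taking a known two-generator one-relator presentation of $\pi_1$ of the $(-2,3,2n+1)$-pretzel knot --- either quoting it directly from \cite{LTaj}, or re-deriving it by applying Tietze transformations to the Wirtinger presentation of a standard pretzel diagram, taking $a$ and $w$ to be suitable meridians. The point one must verify is only that the single relator can be brought to the form $w^n\overleftarrow{r}=r^{-1}w^{n-1}$ with $r=a^{-1}w^{-1}a^{-1}wa$; note that $\overleftarrow{r}=awa^{-1}w^{-1}a^{-1}$, so the relator is of exactly the shape to which Theorem \ref{main1} applies (there is no restriction on the word $r$ in that theorem).

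Granting the isomorphism, the second part is immediate in principle: Theorem \ref{main1} says the universal character ring is $\BC[x,y,z]$ modulo the ideal generated by $Q=P_{\overleftarrow{r}}-P_{r^{-1}w^{-1}}$ and $R_n=P_{w^n\overleftarrow{r}a}-P_{r^{-1}w^{n-1}a}$, so it only remains to evaluate these four trace polynomials and simplify. I would do this with the standard $SL_2(\BC)$ trace calculus: cyclicity and conjugation-invariance of $\tr$, the identity $\tr(U^{-1})=\tr(U)$, the fundamental relation $\tr(UV)+\tr(UV^{-1})=\tr(U)\tr(V)$, the Cayley--Hamilton identity $U^2=\tr(U)\,U-I$ (hence $U^k=S_{k-1}(\tr U)\,U-S_{k-2}(\tr U)\,I$), and the commutator formula $\tr(UVU^{-1}V^{-1})=\tr(U)^2+\tr(V)^2+\tr(UV)^2-\tr(U)\tr(V)\tr(UV)-2$. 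Throughout, write $A=\rho(a)$, $W=\rho(w)$, so $x=\tr A$, $y=\tr W$, $z=\tr(AW)$.

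For $Q$: cyclicity and conjugation-invariance alone give $P_{\overleftarrow{r}}=\tr(AWA^{-1}W^{-1}A^{-1})=\tr(WA^{-1}W^{-1})=x$. For $P_{r^{-1}w^{-1}}=\tr(A^{-1}W^{-1}AWAW^{-1})$ I would write it as $\tr\bigl((A^{-1}W^{-1}A)(WAW^{-1})\bigr)$, apply the fundamental relation, and reduce the two resulting terms with the commutator formula; the outcome is a cubic in $x,y,z$, and subtracting yields $Q=x-xy+(-3+x^2+y^2)z-xyz^2+z^3$. For $R_n$: cyclicity collapses the first word to $\tr\bigl(W^n\,AWA^{-1}W^{-1}\bigr)$ and the second to $\tr\bigl(W^{n-2}\,AWA\bigr)$; substituting $W^k=S_{k-1}(y)\,W-S_{k-2}(y)\,I$ peels off the powers of $W$ and leaves only the short traces $\tr(AWA^{-1}W^{-1})=x^2+y^2+z^2-xyz-2$, $\tr((WA)^2)=z^2-2$ and $\tr(A^2W)=xz-y$. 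Collecting the coefficients of $1$, $x^2$, $xz$ and $z^2$ and repeatedly using the Chebychev identity $y\,S_k(y)=S_{k+1}(y)+S_{k-1}(y)$ to absorb the stray factors of $y$ gives exactly the stated formula for $R_n$.

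The only delicate work --- and where I expect essentially all the effort to go --- is the two pieces of bookkeeping: the Tietze reduction that exhibits the palindromic relator, and the final collapse of the Chebychev coefficients of $R_n$ into the combinations $S_{n-2}+S_{n-3}-S_{n-4}-S_{n-5}$, $-S_{n-2}$, $S_{n-1}+S_{n-3}+S_{n-4}$ and $-(S_{n-2}+S_{n-3})$. Everything else is a direct invocation of Theorem \ref{main1} together with the trace identities above, so no idea beyond Theorem \ref{main1} is required.
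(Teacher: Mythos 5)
Your proposal is correct and matches the paper's route exactly: the paper treats Theorem \ref{aj} as an immediate consequence of Theorem \ref{main1} once the presentation with $r=a^{-1}w^{-1}a^{-1}wa$ is in hand (the presentation is quoted from \cite{LTaj}, and in fact drops out of the Section 3 derivation of $\pi$ for general $(-2,2m+1,2n+1)$ by setting $m=1$, so that $u=w$), with the explicit formulas for $Q$ and $R_n$ left to routine trace calculus. Your reductions $P_{\overleftarrow{r}}=x$, $P_{w^n\overleftarrow{r}a}=\tr(W^nAWA^{-1}W^{-1})$, $P_{r^{-1}w^{n-1}a}=\tr(W^{n-2}AWA)$ and the subsequent Chebyshev bookkeeping do reproduce the stated coefficients, so nothing is missing.
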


Applying Theorem \ref{aj}, we also give an elementary proof of the following result in \cite{Ma} on the character variety of the $(-2,3,2n+1)$-pretzel knot.

\begin{thm} [\cite{Ma}]
Suppose $n \not= 0,\,1,\,2$. Then character variety of the hyperbolic $(-2,3,2n+1)$-pretzel knot has 2 irreducible components if $2n+1$ is not divisible by $3$, and has 3 irreducible components if $2n+1$ is divisible by $3$.
\label{character}
\end{thm}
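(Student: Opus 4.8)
The plan is to start from the explicit description of the universal character ring given in Theorem \ref{aj}, namely the quotient $\BC[x,y,z]/(Q,R_n)$, and to use it to count the irreducible components of the character variety $X(G)$, which is (the zero locus of) the radical of the ideal $(Q,R_n)$. The first observation is that $Q$ does not depend on $n$ and factors once we look for reducible representations: a representation $\rho$ is reducible precisely when $\tr(\rho([a,w]))=2$, i.e. when $x^2+y^2+z^2-xyz-2=0$ (or when the commutator trace condition from the one-relator presentation degenerates). I would first check that $Q$ is divisible by the polynomial $\Delta := x^2+y^2+z^2-xyz-4$ or by $z^2-xyz+x^2+y^2-2-xy\cdot(\text{something})$; more precisely, since the abelianization of the knot group is $\BZ$ and the longitude/meridian data force the reducible characters to lie on the curve where $\rho(a)$ and $\rho(w)$ share an eigenvector, one expects $Q$ to be (up to a unit) a product of a "reducible" factor and possibly a constant. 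In fact from the displayed formula $Q = x - xy + (-3+x^2+y^2)z - xyz^2 + z^3$ one can try to verify directly that $Q$ vanishes on the reducible locus; this pins down the component of reducible characters, which is always present and is one irreducible curve.

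Next I would analyze the irreducible characters, which correspond to the part of $V(Q,R_n)$ not contained in the reducible locus. On that locus one can use the relation $Q=0$ to eliminate, say, one of the monomials and substitute into $R_n$; the key structural feature is that $R_n$ is a $\BZ$-linear combination of the Chebyshev polynomials $S_k(y)$ with coefficients that are quadratic in $x$ and $z$. Using the three-term recurrence $S_{k+1}=yS_k - S_{k-1}$ one rewrites $R_n$ as $A(x,y,z)\,S_{n-2}(y) + B(x,y,z)\,S_{n-3}(y)$ for two fixed polynomials $A,B$ (independent of $n$), by collapsing all the $S_{n-1},S_{n-4},S_{n-5}$ terms via the recurrence. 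So the relevant system becomes $\{Q=0,\ A\cdot S_{n-2}(y)+B\cdot S_{n-3}(y)=0\}$. The irreducible components then come from: (i) the common zeros of $A$ and $B$ together with $Q$ — this should give the "main" component carrying the discrete faithful representation and the geometric component, and it is $n$-independent; and (ii) the locus where $S_{n-2}(y)/S_{n-3}(y)$ equals the fixed rational function $-B/A$, intersected with $Q=0$ — this is where the dependence on $n$, and in particular the divisibility of $2n+1$ by $3$, enters, because $S_{n-2}(y)$ and $S_{n-3}(y)$ have a common factor exactly when certain cyclotomic-type polynomials in $y$ (the factors of $S_{n-2}$) coincide with factors controlled by whether $3\mid 2n+1$, equivalently $3\nmid n-2$ versus $3\mid n-2$. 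Evaluating $y$ at the root $y=-1$ (or $y=1$) of $S_1+S_0$-type factors corresponds to $\rho(w)$ having order $3$, which is exactly the pretzel parameter resonance; when $3 \mid 2n+1$ this root splits off an extra component.

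Concretely, the steps in order are: (1) factor $Q$ and identify the reducible-character curve $C_{\mathrm{red}}$, checking it is irreducible and always a component; (2) rewrite $R_n = A\,S_{n-2}(y) + B\,S_{n-3}(y)$ with $A,B\in\BC[x,y,z]$ explicit and $n$-independent, using the Chebyshev recurrence; (3) show $V(Q,A,B)\setminus C_{\mathrm{red}}$ is a single irreducible curve $C_0$ (the geometric component), independent of $n$, by exhibiting its defining equations and checking irreducibility — e.g. by showing its coordinate ring is a domain or by a direct parametrization using the known $A$-polynomial / trace field of these pretzel knots; (4) on $\{Q=0\}\setminus(C_{\mathrm{red}}\cup V(A,B))$, analyze $A\,S_{n-2}(y)+B\,S_{n-3}(y)=0$: since $S_{n-2}$ and $S_{n-3}$ are coprime as polynomials in $y$ (consecutive Chebyshev polynomials), the extra locus is cut out by one new equation, and it is nonempty and not contained in $C_{\mathrm{red}}\cup C_0$ precisely when the specialization forces $y$ into the set of common roots of $S_{n-2}$ with the resultant-type factor, which happens iff $3 \mid 2n+1$; and finally (5) check irreducibility of that extra component and rule out its existence when $3\nmid 2n+1$, and confirm via the condition $n\neq 0,1,2$ that no components collapse or become empty (these small values are where $S_{n-2}$ or $S_{n-3}$ degenerate, or where the knot is a torus knot and hence non-hyperbolic).

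The main obstacle I expect is step (4)–(5): proving that the "extra" locus is genuinely a new irreducible component (not contained in the closure of the others and not reducible itself) exactly in the divisible case, and doing so elementarily. This requires understanding the factorization of the Chebyshev polynomial $S_{n-2}(y)$ and pinpointing which of its roots $y_0$ lie on $V(Q)$ in a way that is not already accounted for by $C_{\mathrm{red}}$ or $C_0$; the cleanest route is probably to evaluate everything at the specific root $y_0 = 2\cos(\pi/3) = 1$ or the relevant quadratic factor, show $Q(x,y_0,z)$ then factors further giving an honest new curve, and conversely show that for $3\nmid 2n+1$ no such root of $S_{n-2}$ survives on $V(Q)\setminus(C_{\mathrm{red}}\cup C_0)$. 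One also has to be careful that the universal character ring may be non-reduced, so "number of irreducible components of $X(G)$" means the number of minimal primes of $(Q,R_n)$, and I would need to confirm that passing to the radical does not merge two of the components found above — this is where the hyperbolicity hypothesis (ensuring $C_0$ is genuinely a positive-dimensional component containing an irreducible representation) and the restriction $n \neq 0,1,2$ do the necessary work.
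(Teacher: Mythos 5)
Your coarse picture (a reducible curve, a component of irreducible characters, and one extra component exactly when $3\mid 2n+1$, detected by $S_{n-2}(1)=0$, i.e.\ the locus $y=1$, $z=0$) is in the right spirit, but the concrete decomposition you propose does not work and the hard step is left unaddressed. First, the component carrying the irreducible (in particular the geometric) characters is \emph{not} the $n$-independent locus $V(Q,A,B)$: writing $R_n=A\,S_{n-2}(y)+B\,S_{n-3}(y)$ as you suggest, $V(Q,A,B)$ is the intersection of three surfaces in $\BC^3$ and is generically a finite set, not a curve, so your step (3) collapses; the actual non-reducible component depends on $n$ (its defining equation involves $S_{n-2},S_{n-3}$), so the count cannot be organized as ``$n$-independent $C_0$ plus possibly one extra piece from the ratio $S_{n-2}/S_{n-3}$''. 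Second, your step (1) is also off: $Q$ is not divisible by $x^2+y^2+z^2-xyz-4$ (both have degree $3$ and are not proportional); what is true is only that $Q$ vanishes on the character variety, hence on the curve of reducible characters of the knot group, which gives no factorization of $Q$ by itself.

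The genuine gap is the irreducibility analysis, which you defer to ``coordinate ring is a domain'', ``parametrization via the $A$-polynomial / trace field'' --- none of which is elementary or carried out. The paper's route is quite different and is what makes the count possible: using smallness of $3$-strand pretzel knots (Oertel plus Culler--Shalen) every component of $V$ is a curve, so finitely many points may be discarded; after Lemma \ref{z=0} one may assume $z\neq 0$, and then $R'_n=R_n+S_{n-2}(y)\,Q z^{-1}$ is \emph{linear} in $x$, so $x$ can be eliminated (Lemma \ref{alpha=0} handles the degenerate locus $\alpha=0$), reducing $V$ to the plane curve $(-2+3y-y^3+z^2)\,T(y,z)=0$ with $T(y,z)=t_0(y)+t_2(y)z^2+z^4$; irreducibility of $T$ (Proposition \ref{irred}) rests on the facts that $t_0(y)=S_{n-2}(y)\bigl(S_{n-3}(y)-S_{n-4}(y)\bigr)$ is squarefree (explicit cosine roots, Lemma \ref{E}) and on the Chebyshev identity of Lemma \ref{hom}, and the hypothesis $n\neq 0,1,2$ enters exactly there. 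Your proposal contains no mechanism replacing this elimination-plus-squarefreeness argument, no justification for ignoring exceptional loci such as $z=0$ (which requires the one-dimensionality coming from smallness), and no proof that the extra locus at $y=1$, $z=0$ is a component only in the divisible case; as it stands the argument does not establish the component count.
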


As another application of Theorems \ref{main1} and \ref{main2}, we calculate the universal character ring of the group $G=\la a,w \mid R=1\ra$ where $R$ is a palindromic word in $F_{a,w}$. Since $R$ is palindromic, it either has the form $R=\overleftarrow{r}gr$ or $R=\overleftarrow{r}g^2r$, where $r$ is a word in $F_{a,w}$ and $g$ is either $a,\, a^{-1},\, w$, or $w^{-1}$. Without loss of generality, we consider the case $g=w$ only.

By setting $n=0$ in Theorems \ref{main1} and \ref{main2}, we obtain

\begin{thm}
The universal character ring of the group $\la a,w \mid \overleftarrow{r}wr=1\ra$, where $r$ is a word in $a$ and $w$, is the quotient of the polynomial ring $\BC[x,y,z]$ by the ideal generated by the two polynomials $P_{\overleftarrow{r}}-P_{r^{-1}w^{-1}}$ and $P_{\overleftarrow{r}a}-P_{r^{-1}w^{-1}a}.$
\label{odd}
\end{thm}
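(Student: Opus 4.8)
The plan is to obtain Theorem \ref{odd} as the special case $n=0$ of Theorem \ref{main1}; no genuinely new argument is needed, only a verification that the $n=0$ specialization of the group and of the ideal appearing in Theorem \ref{main1} coincide with those in the statement of Theorem \ref{odd}.

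First I would identify the two groups. In Theorem \ref{main1} the group is $\la a,w\mid w^{n}\overleftarrow{r}=r^{-1}w^{n-1}\ra$; taking $n=0$ and reading $w^{0}$ as the empty word, the defining relation becomes $\overleftarrow{r}=r^{-1}w^{-1}$. By the very definition of a group presentation this means we are quotienting $F_{a,w}$ by the normal closure of $\overleftarrow{r}\,(r^{-1}w^{-1})^{-1}=\overleftarrow{r}wr$; equivalently, $\overleftarrow{r}=r^{-1}w^{-1}$ is the same relation as $\overleftarrow{r}w=r^{-1}$, that is, $\overleftarrow{r}wr=1$. Hence the group in Theorem \ref{main1} with $n=0$ is the group $\la a,w\mid \overleftarrow{r}wr=1\ra$ of Theorem \ref{odd}.

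Next I would match the ideals. Theorem \ref{main1} describes the universal character ring as $\BC[x,y,z]$ modulo the ideal generated by $P_{\overleftarrow{r}}-P_{r^{-1}w^{-1}}$ and $P_{w^{n}\overleftarrow{r}a}-P_{r^{-1}w^{n-1}a}$. The first generator already has the form required in Theorem \ref{odd}. For the second I set $n=0$: since $w^{0}\overleftarrow{r}a$ and $\overleftarrow{r}a$ represent the same element of $F_{a,w}$, and $P_{u}$ is by construction the \emph{unique} polynomial in $x,y,z$ computing $\tr(\rho(u))$ for every $\rho\colon F_{a,w}\to \SL_2(\BC)$, we have $P_{w^{0}\overleftarrow{r}a}=P_{\overleftarrow{r}a}$, while $P_{r^{-1}w^{0-1}a}=P_{r^{-1}w^{-1}a}$ trivially. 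Thus the second generator becomes $P_{\overleftarrow{r}a}-P_{r^{-1}w^{-1}a}$, and the two generating sets agree. Since the universal character ring depends only on the group, independently of the chosen generators, this proves Theorem \ref{odd}.

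There is essentially no hard step: the entire content lies in Theorem \ref{main1}, and the above is pure bookkeeping. The only points that deserve a moment's attention are that Theorem \ref{main1} is valid for every integer $n$, so that $n=0$ is admissible, and that the polynomials $P_{u}$ depend only on the element of $F_{a,w}$ represented by $u$, so that inserting or deleting the trivial factor $w^{0}$ leaves them unchanged. I would also note in passing that the companion case $\la a,w\mid \overleftarrow{r}w^{2}r=1\ra$ is obtained in exactly the same fashion from Theorem \ref{main2} with $n=0$, because $w^{0}\overleftarrow{r}=r^{-1}w^{-2}$ is the relation $\overleftarrow{r}w^{2}r=1$.
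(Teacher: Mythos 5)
Your proposal is correct and is exactly the paper's argument: the paper obtains Theorem \ref{odd} by setting $n=0$ in Theorem \ref{main1}, which is precisely the specialization and bookkeeping you carry out (including the observation that the relation $\overleftarrow{r}=r^{-1}w^{-1}$ is the relation $\overleftarrow{r}wr=1$ and that $P_{w^{0}\overleftarrow{r}a}=P_{\overleftarrow{r}a}$). Nothing further is needed.
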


\begin{thm}
The universal character ring of the group $\la a,w \mid \overleftarrow{r}w^2r=1\ra$, where $r$ is a word in $a$ and $w$, is the quotient of the polynomial ring $\BC[x,y,z]$ by the ideal generated by the two polynomials $P_{\overleftarrow{r}}-P_{r^{-1}w^{-2}}$ and $P_{\overleftarrow{r}aw^{-1}}-P_{r^{-1}w^{-2}aw^{-1}}.$
\label{even}
\end{thm}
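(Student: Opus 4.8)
The plan is to obtain Theorem~\ref{even} as the $n=0$ specialization of Theorem~\ref{main2}, exactly as the sentence preceding the statement advertises. The one point that genuinely requires verification is that the group $\la a,w \mid \overleftarrow{r}w^2r=1\ra$ of Theorem~\ref{even} is literally the group $\la a,w \mid w^n\overleftarrow{r}=r^{-1}w^{n-2}\ra$ of Theorem~\ref{main2} specialized at $n=0$. Setting $n=0$, the defining relation of the latter reads $\overleftarrow{r}=r^{-1}w^{-2}$; multiplying on the right by $w^2r$ converts this into $\overleftarrow{r}w^2r=1$, and conversely $\overleftarrow{r}w^2r=1$ yields back $\overleftarrow{r}=r^{-1}w^{-2}$. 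Hence the two relators have the same normal closure in $F_{a,w}$, so the two presentations define the same group on the same generating set $\{a,w\}$, and therefore have the same universal character ring.

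Granting this identification, Theorem~\ref{main2} with $n=0$ asserts that the universal character ring is the quotient of $\BC[x,y,z]$ by the ideal generated by $P_{\overleftarrow{r}}-P_{r^{-1}w^{-2}}$ and $P_{w^0\overleftarrow{r}aw^{-1}}-P_{r^{-1}w^{0-2}aw^{-1}}$. Since $w^0$ is the empty word, the second generator is precisely $P_{\overleftarrow{r}aw^{-1}}-P_{r^{-1}w^{-2}aw^{-1}}$, which recovers exactly the ideal in the statement. No extra bookkeeping about the choice of generators is needed, since the universal character ring depends only on the group.

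The step I would be most careful about is simply confirming that the proof of Theorem~\ref{main2} is valid for the value $n=0$: the statement of Theorem~\ref{main2} imposes no restriction on $n$, and the pretzel-knot applications range over all integers, so this should be automatic, but it deserves an explicit check of the argument at the boundary value. Beyond that there is no real obstacle, since all the substance lives in Theorem~\ref{main2} and Theorem~\ref{even} is a direct corollary. Finally, I would note (as the surrounding text already does) that the companion Theorem~\ref{odd} is obtained in the same way from Theorem~\ref{main1} at $n=0$, and that the remaining palindromic cases $g\in\{a,a^{-1},w^{-1}\}$ reduce to the case $g=w$ by the symmetry indicated in the introduction, so together these handle the universal character ring of every two-generator one-relator group with palindromic relator.
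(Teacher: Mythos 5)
Your proposal is correct and follows exactly the paper's route: Theorem \ref{even} is obtained by setting $n=0$ in Theorem \ref{main2}, after noting that the relation $\overleftarrow{r}=r^{-1}w^{-2}$ is the same as $\overleftarrow{r}w^2r=1$ and that $w^0$ is empty, so the two ideal generators specialize as claimed. Your cautionary check that Theorem \ref{main2} holds at $n=0$ is fine, since its statement and proof (with the Chebyshev polynomials $S_k$ defined for all integers $k$) impose no restriction on $n$.
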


\begin{remark}
By \cite{HTT} tunnel number one knots have presentations with two generators and one relator, where the relator is palindromic in the two generators.  Hence Theorems \ref{odd} and \ref{even} can be applied to calculate the universal character ring of the knot group of tunnel number one knots. 
\end{remark}

In our joint work with T. Le on the AJ conjecture of \cite{Ga, Ge, FGL} which relates the A-polynomial and the colored Jones polynomials of a knot, it is important to understand the universal character
ring of the knot group \cite{Le06, LTaj}. The universal character ring has been so far calculated for a few link groups, including two-bridge knot groups \cite{Le93, PS}, the $(-2,3,2n+1)$-pretzel knot groups \cite{LTaj} (see also Theorem \ref{aj} above), two-bridge link groups \cite{LTskein}, and the $(-2,2m+1,2n)$-pretzel link groups \cite{pretzel-link}.

In the present paper we consider the $(-2,2m+1,2n+1)$-pretzel knot group, where $m$ and $n$ are integers. As an application of Theorem \ref{odd} we will show the following

\begin{thm}
The fundamental group of the $(-2,2m+1,2n+1)$-pretzel knot is isomorphic to the group $\la a,w \mid \overleftarrow{r}wr=1\ra$ where
\begin{eqnarray*}
r &=& \begin{cases} su^{k-1}awaw^{-1}a^{-1}u^{-k}\quad & \text{if} \quad n=2k,\\
su^kawa^{-1}w^{-1}a^{-1}u^{-k}
\quad & \text{if} \quad n=2k+1.
\end{cases}\\
u &=& (awaw^{-1})^{1-m}w, \quad \text{and}\\
s &=& \begin{cases} a(w^{-1}awa)^{-l} & \text{if } m=2l,\\
(w^{-1}awa)^{-l} \quad & \text{if } m=2l+1.
             \end{cases}.
\end{eqnarray*}
Hence its universal character ring is the quotient of the polynomial ring $\BC[x,y,z]$ by the ideal generated by the two polynomials $P_{\overleftarrow{r}}-P_{r^{-1}w^{-1}}$ and $P_{\overleftarrow{r}a}-P_{r^{-1}w^{-1}a}.$
\label{pretzelknot}
\end{thm}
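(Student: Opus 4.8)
The plan is to prove Theorem~\ref{pretzelknot} in two independent stages: first establish the group-theoretic claim that $\pi_1$ of the $(-2,2m+1,2n+1)$-pretzel knot admits the stated presentation $\la a,w \mid \overleftarrow{r}wr=1\ra$, and then simply invoke Theorem~\ref{odd} (the $n=0$ specialization of Theorem~\ref{main1}) to read off the universal character ring. Since Theorem~\ref{odd} does all the representation-theoretic work, the entire content of the proof is the presentation computation, and the key observation to verify along the way is that the relator $\overleftarrow{r}wr$ is genuinely a palindrome with central letter $w$, so that Theorem~\ref{odd} applies verbatim.

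First I would start from a standard two-bridge-like or Wirtinger-type presentation of the pretzel knot group. The $(-2,2m+1,2n+1)$-pretzel knot is a tunnel number one knot (indeed, a Montesinos knot built from three rational tangles), so by \cite{HTT} it has a two-generator one-relator presentation with palindromic relator; the task is to produce the \emph{explicit} one with the $r$, $u$, $s$ given in the statement. Concretely I would: (i) write down a genus-one or reduced Wirtinger presentation from a diagram of the pretzel knot, using meridians of suitable arcs as generators; (ii) use Tietze transformations to eliminate all but two generators $a$ and $w$ (choosing $a$ a meridian and $w$ the natural "tunnel" element), tracking how the relators collapse; (iii) organize the resulting single relator by isolating the repeating blocks coming from the $2m+1$ and $2n+1$ half-twists — these are where the Chebyshev-flavored powers $u^{k}$ and the prefix $s$ depending on the parity of $m$ come from. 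The case split into $n=2k$ versus $n=2k+1$ and $m=2l$ versus $m=2l+1$ in the statement is exactly the bookkeeping of whether an extra $a$ or $a^{-1}$ gets absorbed when the number of crossings in a twist region is even or odd.

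The main obstacle, and the step deserving the most care, is step (ii)--(iii): carrying out the Tietze reduction cleanly enough that the final relator visibly has the palindromic form $\overleftarrow{r}wr$. It is easy to get \emph{a} two-generator one-relator presentation; it is comparatively delicate to massage the relator into the symmetric normal form without sign or parity errors, especially coordinating the four sub-cases. A useful sanity check I would build in: verify directly that $\overleftarrow{(\overleftarrow{r}wr)} = \overleftarrow{r}\,\overleftarrow{w}\,r = \overleftarrow{r}wr$, i.e.\ that the word as written really is fixed by letter-reversal (this forces the central $w$, not $w^{-1}$, and pins down the exact form of $r$), and also cross-check the relator against a known special case — for instance $m=1$, where $u=w$ and the knot is the $(-2,3,2n+1)$-pretzel knot, so the presentation must reconcile with $\la a,w \mid w^n\overleftarrow{r_0}=r_0^{-1}w^{n-1}\ra$ with $r_0=a^{-1}w^{-1}a^{-1}wa$ from Theorem~\ref{aj} after the appropriate cyclic permutation and the substitution $n=0$ adjustment.

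Finally, once the presentation $\la a,w \mid \overleftarrow{r}wr=1\ra$ is established with the displayed $r$, the conclusion is immediate: Theorem~\ref{odd} states that the universal character ring of any group of this form is $\BC[x,y,z]$ modulo the ideal $\big(P_{\overleftarrow{r}}-P_{r^{-1}w^{-1}},\ P_{\overleftarrow{r}a}-P_{r^{-1}w^{-1}a}\big)$, which is precisely the asserted description. No further computation of the polynomials $P_{\overleftarrow{r}}$, $P_{r^{-1}w^{-1}}$, etc., is needed for the statement as phrased — they are left implicit in terms of $r$ — so the proof ends there.
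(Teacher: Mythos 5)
Your second stage is fine and matches the paper: once the presentation $\la a,w \mid \overleftarrow{r}wr=1\ra$ is in hand, Theorem~\ref{odd} gives the universal character ring immediately. The gap is in the first stage, which is the entire content of the theorem: you describe a plan (Wirtinger presentation, Tietze elimination down to two generators, then ``massage'' the relator into the form $\overleftarrow{r}wr$ with the displayed $r$, $u$, $s$), but you never carry out any of it, and the specific ingredients that make it work are missing. The paper starts from the standard presentation $\la a,b,c \mid bab^{-1}=(ac)^{-m}c(ac)^m,\ a^{-1}ba=(cb)^nc(cb)^{-n}\ra$, introduces the particular generator $w=(ca)^{m-1}cb$, deduces $ca=awaw^{-1}$ and $cb=u=(awaw^{-1})^{1-m}w$, and so obtains the two-generator presentation $\la a,w \mid u^{n}awa^{-1}w^{-1}a^{-1}=a^{-1}w^{-1}awau^{n-1}\ra$. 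The genuinely nontrivial step is then the lemma that $u$ itself is a palindrome, $u=\overleftarrow{s}ws$ with the stated $s$ (proved via the identity $(uv)^{k+1}=u(vu)^{k}v$, split on the parity of $m$); only because of this can one conjugate the relator by $u^{-k}$ (parity split on $n$) and recognize it as $\overleftarrow{r}wr$ with the displayed $r$. None of these steps appears in your proposal, so the claimed presentation is not established.

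A further point: the sanity check you rely on is vacuous. For \emph{any} word $r$, the word $\overleftarrow{r}wr$ is automatically fixed by letter reversal, since $\overleftarrow{\overleftarrow{r}wr}=\overleftarrow{r}\,w\,r$ by Lemma~\ref{def}; so verifying reversal-invariance of the candidate relator pins down nothing and cannot detect an error in the Tietze reduction. What actually needs checking is that the pretzel relator, after the change of generators, \emph{equals} $\overleftarrow{r}wr$ for the specific $r$ in the statement, and that is exactly the computation (change of generators, palindromicity of $u$, centering by conjugation) that your proposal leaves undone. The cross-check against the $(-2,3,2n+1)$ case is a reasonable consistency test but likewise does not substitute for the derivation in general $m$.
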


\subsection{Plan of the paper} In Section 1, we collect preliminary facts and lemmas that will be repeated used in the proofs of the main theorems of the paper. In Section 2, we consider the universal character ring of groups and prove Theorems \ref{main1} and \ref{main2}. In Section 3, we study the universal character ring of pretzel knots and prove Theorems \ref{character} and \ref{pretzelknot}.

\subsection{Acknowledgement} The author would like to thank Thang T.Q. Le for helpful discussions. He wishes to thank the referee for comments and suggestions that greatly improves the presentation of the paper.

\section{Preliminary facts and lemmas}

\subsection{The backward operator} Recall from the Introduction that for a word $u$ in $F_{a,w}$, we denote by $\overleftarrow{u}$ the word obtained from $u$ by writing the letters in $u$ in reversed order.
\begin{lemma}
One has $\overleftarrow{\overleftarrow{u}}=u$ , $\overleftarrow{uv}=\overleftarrow{v}\overleftarrow{u}$ and $\overleftarrow{u^{-1}}=\overleftarrow{u}^{-1}$ for all words $u,v$ in $F_{a,w}$. Hence $\overleftarrow{u^n}=\overleftarrow{u}^n$ for all integers $n$.
\label{def}
\end{lemma}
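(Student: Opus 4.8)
The plan is to handle reversal first at the level of \emph{all} words in the alphabet $\Sigma=\{a,a^{-1},w,w^{-1}\}$, i.e.\ on the free monoid $M$ on $\Sigma$, and then check that it descends to the free group $F_{a,w}=M/{\sim}$, where $\sim$ is the congruence generated by the elementary reductions $P\,xx^{-1}\,Q\sim PQ$ with $x\in\Sigma$. On $M$, write each word as a finite sequence $g_1g_2\cdots g_k$ with $g_i\in\Sigma$ and set $\overleftarrow{g_1\cdots g_k}=g_k\cdots g_1$. At this level all three identities are immediate: $\overleftarrow{\overleftarrow u}=u$ because reversing a sequence twice returns it; $\overleftarrow{uv}=\overleftarrow v\,\overleftarrow u$ because concatenation reverses to the swapped concatenation; and $\overleftarrow{\varepsilon}=\varepsilon$ for the empty word $\varepsilon$.

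The one substantive point is that $\overleftarrow{(\cdot)}$ is compatible with free reduction, so that it induces a well-defined self-map of $F_{a,w}$. For this I would verify that an elementary reduction is carried to an elementary reduction: if $W=P\,xx^{-1}\,Q$ with $x\in\Sigma$ and $W'=PQ$, then $\overleftarrow W=\overleftarrow Q\,(x^{-1}x)\,\overleftarrow P$, and since $x^{-1}x$ is again a cancelable pair, $\overleftarrow W$ reduces to $\overleftarrow Q\,\overleftarrow P=\overleftarrow{PQ}=\overleftarrow{W'}$. Consequently, if $W$ and $W'$ represent the same element of $F_{a,w}$ then so do $\overleftarrow W$ and $\overleftarrow{W'}$, and $\overleftarrow{(\cdot)}$ passes to the quotient. (Equivalently, one may note directly that the reverse of a reduced word is reduced, because the defining condition ``$g_{i+1}\neq g_i^{-1}$ for all $i$'' is symmetric under reversing the sequence, and then argue throughout with the unique reduced representatives, using that reversal commutes with reduction.)

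With well-definedness established, the three stated identities on $F_{a,w}$ follow from their counterparts on $M$ (choose reduced representatives, reverse, and reduce again; the two operations commute). In particular $\overleftarrow{(\cdot)}$ is an anti-homomorphism of $F_{a,w}$ fixing $1$, whence $\overleftarrow u\cdot\overleftarrow{u^{-1}}=\overleftarrow{u^{-1}u}=\overleftarrow 1=1$, giving $\overleftarrow{u^{-1}}=\overleftarrow u^{\,-1}$. Finally $\overleftarrow{u^n}=\overleftarrow u^{\,n}$ is obtained by induction on $n\geq 0$ from $\overleftarrow{u^{n}}=\overleftarrow{u\cdot u^{n-1}}=\overleftarrow{u^{n-1}}\,\overleftarrow u=\overleftarrow u^{\,n-1}\overleftarrow u=\overleftarrow u^{\,n}$, and then extended to negative $n$ via $\overleftarrow{u^{n}}=\overleftarrow{(u^{-n})^{-1}}=(\overleftarrow{u^{-n}})^{-1}=(\overleftarrow u^{\,-n})^{-1}=\overleftarrow u^{\,n}$.

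The only step calling for any care — and hence the main (and rather modest) obstacle — is the compatibility of reversal with cancellation in the free group: one must observe that reversing turns the cancelable pair $xx^{-1}$ into the cancelable pair $x^{-1}x$, rather than destroying reducibility; everything else is formal.
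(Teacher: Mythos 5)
Your proof is correct and follows essentially the same route as the paper's: the first two identities are read off directly from the definition of reversal, and $\overleftarrow{u^{-1}}=\overleftarrow{u}^{-1}$ is obtained exactly as in the paper by applying anti-multiplicativity to $uu^{-1}=1$ (the paper's ``take $v=u^{-1}$''), with the power identity then following by the obvious induction. The only difference is that you additionally verify that reversal is compatible with free reduction and so descends to $F_{a,w}$, a point the paper takes for granted; this extra care is harmless and does not change the substance of the argument.
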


\begin{proof}
The first two identities follow directly from the definition of the backward operator $\overleftarrow{\cdot}$. The third identity follows from the second one by taking $v=u^{-1}$.
\end{proof}

We will also use the following result in \cite{Le93, pretzel-link}.

\begin{lemma} 
One has $P_{uv} = P_{\overleftarrow{u}\overleftarrow{v}}$ for all words $u,v$ in $F_{a,w}$.
\label{zero}
\end{lemma}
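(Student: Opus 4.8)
The plan is to reduce the identity $P_{uv}=P_{\overleftarrow{u}\overleftarrow{v}}$ to a purely matrix-theoretic fact about $SL_2(\BC)$, exploiting the uniqueness of the trace polynomials $P_u$ asserted in the Introduction. Since $P_{uv}$ is the \emph{unique} polynomial in $x,y,z$ with $\tr(\rho(uv))=P_{uv}(x,y,z)$ for all representations $\rho\colon F_{a,w}\to SL_2(\BC)$, and similarly for $P_{\overleftarrow{u}\overleftarrow{v}}$, it suffices to exhibit, for each such $\rho$, another representation $\rho'$ with the same triple $(x,y,z)$ of traces and with $\tr(\rho'(\overleftarrow{u}\,\overleftarrow{v}))=\tr(\rho(uv))$. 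The natural candidate is the ``transpose'' representation: given $\rho$, define $\rho'$ on generators by $\rho'(a)=\rho(a)^{T}$ and $\rho'(w)=\rho(w)^{T}$, extended to a homomorphism on $F_{a,w}$.

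The key steps, in order, are as follows. First, $\rho'$ is a genuine homomorphism into $SL_2(\BC)$: transposition reverses products, $(\rho(g)\rho(h))^{T}=\rho(h)^{T}\rho(g)^{T}$, so for a word $u=g_1g_2\cdots g_\ell$ in the generators we get $\rho'(\overleftarrow{u})=\rho'(g_\ell)\cdots\rho'(g_1)=\rho(g_\ell)^{T}\cdots\rho(g_1)^{T}=\bigl(\rho(g_1)\cdots\rho(g_\ell)\bigr)^{T}=\rho(u)^{T}$; this extends to words with inverses since $(M^{-1})^{T}=(M^{T})^{-1}$ and $\overleftarrow{u^{-1}}=\overleftarrow{u}^{-1}$ by Lemma \ref{def}. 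Second, trace is invariant under transposition, so $x'=\tr(\rho'(a))=\tr(\rho(a)^{T})=\tr(\rho(a))=x$, and likewise $y'=y$; for $z$ one uses $\rho'(aw)=\rho'(a)\rho'(w)=\rho(a)^{T}\rho(w)^{T}=(\rho(w)\rho(a))^{T}$, so $z'=\tr(\rho(w)\rho(a))=\tr(\rho(a)\rho(w))=z$. Third, apply the first step to the word $uv$: $\rho'(\overleftarrow{u}\,\overleftarrow{v})=\rho'(\overleftarrow{uv})=\rho(uv)^{T}$, hence $\tr(\rho'(\overleftarrow{u}\,\overleftarrow{v}))=\tr(\rho(uv))$.

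Putting these together: for every $\rho$ we have
\[
P_{\overleftarrow{u}\,\overleftarrow{v}}(x,y,z)=P_{\overleftarrow{u}\,\overleftarrow{v}}(x',y',z')=\tr(\rho'(\overleftarrow{u}\,\overleftarrow{v}))=\tr(\rho(uv))=P_{uv}(x,y,z),
\]
so the two polynomials $P_{uv}$ and $P_{\overleftarrow{u}\,\overleftarrow{v}}$ agree on the image of the map $\rho\mapsto(x,y,z)$. Since that map is surjective onto $\BC^3$ (the character variety of $F_{a,w}$ is all of $\BC^3$ by Fricke--Klein--Vogt, and every triple is realized), the two polynomials are identical. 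The only point requiring any care is this last surjectivity remark, which justifies passing from ``equal as functions of $\rho$'' to ``equal as polynomials''; but it is exactly the content of the Fricke--Klein--Vogt statement already recalled in the Introduction, so there is no real obstacle. (Alternatively one can invoke the stated \emph{uniqueness} of $P_u$ directly, without mentioning surjectivity.)
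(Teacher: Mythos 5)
The paper itself gives no proof of Lemma \ref{zero}; it simply quotes the result from \cite{Le93, pretzel-link}. Your transpose-representation argument is therefore a self-contained proof rather than a reproduction of anything in the text, and it is essentially correct: since $F_{a,w}$ is free, sending $a\mapsto\rho(a)^{T}$, $w\mapsto\rho(w)^{T}$ does define a representation $\rho'$ into $SL_2(\BC)$ (transposition preserves the determinant), transposition preserves traces, your computation $z'=\tr\big((\rho(w)\rho(a))^{T}\big)=\tr(\rho(a)\rho(w))=z$ is exactly right, and the passage from a pointwise trace identity to an identity of polynomials is justified either by the surjectivity of $\rho\mapsto(x,y,z)$ onto $\BC^3$ (Fricke--Klein--Vogt) or, as you note, directly by the uniqueness of the trace polynomials stated in the Introduction.

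One intermediate equality needs a one-line repair. You write $\rho'(\overleftarrow{u}\,\overleftarrow{v})=\rho'(\overleftarrow{uv})$, but by Lemma \ref{def} the reversal of $uv$ is $\overleftarrow{v}\,\overleftarrow{u}$, not $\overleftarrow{u}\,\overleftarrow{v}$; what your first step actually yields is $\rho'(\overleftarrow{u}\,\overleftarrow{v})=\rho'(\overleftarrow{vu})=\rho(vu)^{T}$, so the claimed identity $\rho'(\overleftarrow{u}\,\overleftarrow{v})=\rho(uv)^{T}$ is false as a matrix equation in general. The conclusion survives nonetheless, because
\[
\tr\big(\rho'(\overleftarrow{u}\,\overleftarrow{v})\big)=\tr\big(\rho(vu)^{T}\big)=\tr\big(\rho(vu)\big)=\tr\big(\rho(uv)\big),
\]
using invariance of the trace under transposition and under cyclic permutation, Identity \eqref{tr2}. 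With that correction inserted, your proof is complete and stands as a legitimate (and standard) argument for a fact the paper only cites.
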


\subsection{Trace identities} For all matrices $A,B,C$ in $SL_2(\BC)$, the following trace identities are well-known: 
\begin{eqnarray}
\tr A &=& \tr A^{-1}, \label{tr1}\\
\tr {AB} &=& \tr {BA} \label{tr2},\\
\tr {BA}+ \tr{BA^{-1}} &=& (\tr A) (\tr {B}) \label{tr3},\\
\tr {BAC}+ \tr{BA^{-1}C} &=& (\tr A) (\tr {BC}).
\label{Cayley}
\end{eqnarray}
Note that Identities \eqref{tr3} and \eqref{Cayley} follow from the Cayley-Hamilton theorem  $A+A^{-1}=P_A I_{2 \times 2}$, where $I_{2 \times 2}$ is the $2 \times 2$ identity matrix. 

\begin{lemma}
One has $$P_{ucd}+P_{udc}=-P_{cd^{-1}}P_u+P_cP_{ud}+P_dP_{uc}$$
\label{4}
for all words $c,d,u$ in $F_{a,w}$.
\end{lemma}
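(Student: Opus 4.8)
The plan is to reduce this to a pure matrix-trace identity in $SL_2(\BC)$ and then invoke the uniqueness of the polynomials $P_v$. Since for any representation $\rho\colon F_{a,w}\to SL_2(\BC)$ we have $P_v(x,y,z)=\tr\rho(v)$, and since $x,y,z$ are algebraically independent (the character variety of $F_{a,w}$ is $\BC^3$ by Fricke--Klein--Vogt), it suffices to prove
\[
\tr(UCD)+\tr(UDC)=-\tr(CD^{-1})\,\tr U+\tr C\,\tr(UD)+\tr D\,\tr(UC)
\]
for all $U,C,D\in SL_2(\BC)$; the polynomial identity then follows because both sides are polynomials in $x,y,z$ agreeing on all of $\BC^3$. (Strictly, one should note every element of $\BC^3$ is realized as $(\tr\rho(a),\tr\rho(w),\tr\rho(aw))$ for some $\rho$, which is exactly the Fricke--Klein--Vogt statement already cited.)

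To prove the matrix identity, first I would apply the Cayley--Hamilton relation in the form \eqref{Cayley} with $A=C$, $B=U$, $C=D$, giving
\[
\tr(UCD)=\tr C\,\tr(UD)-\tr(UC^{-1}D).
\]
So it remains to show $\tr(UDC)-\tr(UC^{-1}D)=-\tr(CD^{-1})\tr U+\tr D\,\tr(UC)$. Using \eqref{tr2} I rewrite $\tr(UDC)=\tr(CUD)$ and $\tr(UC^{-1}D)=\tr(DUC^{-1})$, and then I apply \eqref{Cayley} again, this time with the middle matrix being $D$: specifically $\tr(CUD)+\tr(CUD^{-1})=\tr D\,\tr(CU)=\tr D\,\tr(UC)$. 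This produces the term $\tr D\,\tr(UC)$ I want, at the cost of introducing $-\tr(CUD^{-1})$; combining everything, the claim reduces to the single relation
\[
-\tr(CUD^{-1})-\tr(DUC^{-1})=-\tr(CD^{-1})\,\tr U,
\]
i.e. $\tr(CUD^{-1})+\tr(C^{-1}UD)=\tr(CD^{-1})\tr U$ after using $\tr(DUC^{-1})=\tr(C^{-1}DU)=\tr(UC^{-1}D)$ wait—more carefully, $\tr(DUC^{-1})=\tr(C^{-1}DU)$, and I want this in the form $\tr(B A^{-1})$ style; setting $B=UD^{-1}$... the cleanest route is to apply \eqref{Cayley} with middle matrix $A$ chosen so that $A$ and $A^{-1}$ pair the two remaining traces. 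Taking $A=CD^{-1}$ (or equivalently grouping $CUD^{-1}$ and recognizing $DUC^{-1}=(CU^{-1}D^{-1})^{-1}$ has the same trace as $CU^{-1}D^{-1}$), identity \eqref{tr3} with $B=UD^{-1}\cdot$(something) closes it. The one genuine bookkeeping point—the step I expect to be the main obstacle—is keeping the cyclic rearrangements \eqref{tr2} and the two applications of \eqref{Cayley}/\eqref{tr3} consistent so that the leftover terms cancel exactly; there is no conceptual difficulty, only the need to pick the right matrix to feed into the Cayley--Hamilton identity at each stage so that the parasitic terms telescope.

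Once the matrix identity is verified, the lemma follows immediately by the uniqueness of $P_v$: both sides of the asserted polynomial identity, evaluated at $(x,y,z)=(\tr\rho(a),\tr\rho(w),\tr\rho(aw))$, equal the corresponding trace expressions, which agree by the matrix computation; since this holds for all $\rho$ and hence for all $(x,y,z)\in\BC^3$, the polynomials are equal in $\BC[x,y,z]$. I would present the argument in roughly this order: (1) reduce to traces via uniqueness; (2) expand $\tr(UCD)$ and $\tr(UDC)$ symmetrically using \eqref{Cayley} on the first letter; (3) apply \eqref{tr2} and a second use of \eqref{Cayley} to generate the $\tr C\,\tr(UD)$ and $\tr D\,\tr(UC)$ terms; (4) collapse the residual pair of traces into $-\tr(CD^{-1})\tr U$ via \eqref{tr3}; (5) conclude.
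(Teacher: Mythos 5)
Your proposal is correct and is essentially the paper's own argument: both proofs amount to a short chain of applications of the trace identities \eqref{tr1}--\eqref{Cayley} (the paper applies them directly to the polynomials $P$, expanding $P_{u(dc)}$ step by step, which is the same computation as your symmetric expansion of $\tr(UCD)$ and $\tr(UDC)$, with the reduction to matrices and back via Fricke--Klein--Vogt left implicit). The one step you hedge on closes in a single line: $\tr(CUD^{-1})+\tr(DUC^{-1})=\tr\bigl(U(D^{-1}C)\bigr)+\tr\bigl(U(D^{-1}C)^{-1}\bigr)=\tr(D^{-1}C)\,\tr U=\tr(CD^{-1})\,\tr U$ by \eqref{tr2} and \eqref{tr3}, which is exactly the residual relation your computation requires.
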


\begin{proof} 
We have
\begin{align*}
P_{u(dc)} &= P_{dc} P_{u}-P_{uc^{-1}d^{-1}} & \text{by Identity~} \eqref{tr3}\\
&=P_{dc}P_u-(P_{d}P_{uc^{-1}}-P_{uc^{-1}d}) & \text{by Identity~} \eqref{tr3}\\
&=P_{dc}P_u-P_d P_{uc^{-1}}+P_{uc^{-1}d} & \\
&=P_{dc} P_u-P_d(P_cP_{u}-P_{uc})+(P_cP_{ud}-P_{ucd}) & \text{by Identities~} \eqref{tr3} \text{~and~}\eqref{Cayley}\\
&=(P_{cd}-P_cP_d)P_u+P_cP_{ud}+P_dP_{uc}-P_{ucd} & \text{by Identity~} \eqref{tr2}\\
&= -P_{cd^{-1}}P_u+P_cP_{ud}+P_dP_{uc}-P_{ucd} & \text{by Identity~} \eqref{tr3}
\end{align*}
The lemma follows.
\end{proof}

\subsection{Chebyshev polynomials} Let $S_k(t)$'s be the Chebychev polynomials defined by $S_0(t)=1,~S_1(t)=t$ and $S_{k+1}(t)=tS_{k}(t)-S_{k-1}(t)$ for all integers $k$.

It is easy to see that $S_k(2)=k+1$ and $S_k(-2)=(-1)^k(k+1)$ for all integers $k$.

\begin{lemma}
One has $S^2_{k}(t)-tS_{k}(t)S_{k-1}(t)+S_{k-1}^2(t)=1$.
\label{hom}
\end{lemma}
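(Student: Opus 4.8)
The plan is to show that the quantity
\[
f(k):=S_k^2(t)-t\,S_k(t)S_{k-1}(t)+S_{k-1}^2(t)
\]
does not depend on $k$, and then to evaluate it at a single convenient value. Since $S_0(t)=1$ and $S_1(t)=t$, one computes immediately $f(1)=t^2-t\cdot t\cdot 1+1=1$, which will be the value we want.

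The main step is to verify that $f(k+1)=f(k)$ for every integer $k$. Writing $f(k+1)=S_{k+1}^2-t\,S_{k+1}S_k+S_k^2$ and substituting the recursion $S_{k+1}=tS_k-S_{k-1}$, the term $S_{k+1}^2$ contributes $t^2S_k^2-2t\,S_kS_{k-1}+S_{k-1}^2$ and the term $-t\,S_{k+1}S_k$ contributes $-t^2S_k^2+t\,S_kS_{k-1}$; adding these together with $S_k^2$, the $t^2S_k^2$ terms cancel and one is left with exactly $S_k^2-t\,S_kS_{k-1}+S_{k-1}^2=f(k)$. Because the three-term recursion determines $S_k$ for \emph{all} integers $k$ — running forward from $S_0,S_1$ and equally running backward via $S_{k-1}=tS_k-S_{k+1}$ — the identity $f(k+1)=f(k)$ propagates in both directions, so $f(k)=f(1)=1$ for all $k\in\BZ$, which is the claim.

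Equivalently, and perhaps more conceptually, one can encode the recursion as $\binom{S_{k+1}}{S_k}=M\binom{S_k}{S_{k-1}}$ with $M=\left(\begin{smallmatrix} t & -1\\ 1 & 0\end{smallmatrix}\right)$, observe that the Gram matrix $Q=\left(\begin{smallmatrix} 1 & -t/2\\ -t/2 & 1\end{smallmatrix}\right)$ of the quadratic form $q(a,b)=a^2-tab+b^2$ satisfies $M^{T}QM=Q$ (a one-line check, in line with $\det M=1$), and conclude $q(S_k,S_{k-1})=q(S_1,S_0)=1$; invertibility of $M$ takes care of negative $k$ for free. I do not anticipate a genuine obstacle here: the argument is an elementary two-sided induction, and the only point deserving a word of care is precisely that $k$ ranges over all of $\BZ$, so the base case must be propagated in both directions (or one invokes the invertibility of $M$ in the matrix formulation).
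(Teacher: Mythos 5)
Your proof is correct and follows essentially the same route as the paper: both show that $S_k^2(t)-tS_k(t)S_{k-1}(t)+S_{k-1}^2(t)$ is independent of $k$ by using the recursion $S_{k+1}=tS_k-S_{k-1}$, and then evaluate at a base case (you use $k=1$, the paper uses $k=0$). Your explicit remark that the recursion runs in both directions, and the optional matrix reformulation, are fine embellishments but do not change the substance of the argument.
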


\begin{proof}
Let $g_k(t)=S_k^2(t)-tS_k(t)S_{k-1}(t)+S_{k-1}^2(t)$. Then
\begin{eqnarray*}
g_k(t) &=& (S_k(t)-tS_{k-1}(t))S_{k}(t)+S_{k-1}^2(t)\\
&=& -S_{k-2}(t)S_{k}(t)+(tS_{k-2}-S_{k-3}(t))S_{k-1}(t)\\
&=& S_{k-2}(t)(tS_{k-1}(t)-S_{k}(t))-S_{k-3}(t)S_{k-1}(t)\\
&=& S_{k-2}^2(t)-(tS_{k-2}(t)-S_{k-1}(t))S_{k-1}(t)\\
&=& g_{k-1}(t).
\end{eqnarray*} 
It means that $g_k(t)$ does not depend on $k$ and so $g_k(t)=g_0(t)=1$. Hence $S^2_{k}(t)-tS_{k}(t)S_{k-1}(t)+S_{k-1}^2(t)=g_{k}(t)=1$.
\end{proof}

\section{Proof of Theorems \ref{main1} and \ref{main2}}
\label{Thms12}

\subsection{The universal character ring of two-generator one-relator groups}

\begin{proposition}
Let $G:=\la a,w \mid u=v\ra$, where $u$ and $v$ are two words in $F_{a,w}$. Then the universal character ring of $G$ is the quotient of the polynomial ring $\BC[x,y,z]$ by the ideal generated by the four polynomials $P_u-P_v,~P_{ua}-P_{va},~P_{uw}-P_{vw}$ and $P_{uwa}-P_{vwa}$.
\label{prop}
\end{proposition}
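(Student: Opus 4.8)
The plan is to compare two ideals in $\BC[x,y,z]$: the ideal $I$ generated by the four displayed polynomials, and the defining ideal $J$ of the universal character ring of $G$, i.e.\ the ideal generated by all $P_s-P_t$ with $s,t$ words in $a,w$ that are equal in $G$. One inclusion is immediate: since $u=v$ in $G$ forces $ua=va$, $uw=vw$ and $uwa=vwa$ in $G$, each of the four generators of $I$ has the form $P_s-P_t$ with $s=_G t$, so $I\subseteq J$. The whole content is the reverse inclusion $J\subseteq I$.

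For that I would first shrink the generating set of $J$. Put $r:=uv^{-1}$, so $G=\la a,w\mid r\ra$ and two words $s,t$ are equal in $G$ exactly when $st^{-1}$ is a product of conjugates $g\,r^{\pm1}g^{-1}$ in $F_{a,w}$. Telescoping such a product and using the cyclic identity \eqref{tr2} together with the conjugation invariance $P_{gxg^{-1}}=P_x$ of trace polynomials, one sees that $J$ is already generated by the elements $P_{\lambda r}-P_\lambda$ for $\lambda\in F_{a,w}$. Next I would use the Cayley--Hamilton identity $A+A^{-1}=P_AI_{2\times2}$, which gives $P_{XY^{-1}}=P_YP_X-P_{XY}$ for all words $X,Y$; applying this twice (to strip $v^{-1}$ off both $\lambda uv^{-1}$ and $\lambda=\lambda vv^{-1}$) and using \eqref{tr2} once more yields
\[
P_{\lambda r}-P_\lambda \;=\; P_v\,D(\lambda)-D(v\lambda),\qquad\text{where }D(k):=P_{ku}-P_{kv}.
\]
Thus $J\subseteq I$ reduces to the claim that $D(k)\in I$ for every word $k$, and by \eqref{tr2} the elements $D(1),D(a),D(w),D(wa)$ are exactly the four generators of $I$.

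The remaining step, $D(k)\in I$ for all $k$, I would prove by induction on the length of the reduced word $k$, using two families of identities. From \eqref{tr3} and \eqref{Cayley} one gets the linear recursions $D(gk)+D(g^{-1}k)=P_g\,D(k)$ and $D(kg)+D(kg^{-1})=P_g\,D(k)$ for a generator $g$, which carry a letter past the front or the back of $k$ once two neighbouring translates are known; from Lemma \ref{4} one gets the commutation identity
\[
D(\alpha\beta k)+D(\beta\alpha k)=-P_{\alpha\beta^{-1}}D(k)+P_\alpha D(\beta k)+P_\beta D(\alpha k)
\]
for all words $\alpha,\beta,k$, which cyclically rotates blocks of $k$ and, with $\alpha$ a single letter and $\beta$ ending in $\alpha^{-1}$, strips a conjugating letter. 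After checking the cases $|k|\le2$ by hand from $D(1),D(a),D(w),D(wa)\in I$, the inductive step splits: if $k$ is not cyclically reduced, write $k=ghg^{-1}$ and use the commutation identity with $\beta=hg^{-1}$ to express $D(k)$ through the strictly shorter $D(h),D(g),D(hg^{-1})$; if $k$ is a power $a^{\pm\ell}$ or $w^{\pm\ell}$, the linear recursion is just the Chebyshev recursion and finishes it; and if $k$ is cyclically reduced and not a power, then writing $\sigma k$ for the word got by moving the first letter of $k$ to the end, the commutation identity with $\alpha$ the first letter gives $D(k)+D(\sigma k)\in I$, hence $D(\sigma^ik)+D(\sigma^{i+1}k)\in I$ for all $i$; for $|k|$ odd this telescopes with alternating signs to $2D(k)\in I$, and for $|k|$ even it is combined with $D(k)+D(\sigma^2k)\in I$ (rotation by the first two letters, legitimate because $D$ of a two-letter word is a base case) to again give $2D(k)\in I$.

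I expect this last induction to be the main obstacle. The linear trace recursions by themselves only propagate membership in $I$ among translates that differ by a single letter, so genuinely mixed words such as $aw$ are \emph{not} reachable from the four base cases that way; it is Lemma \ref{4} that breaks the impasse, and the real work lies in organizing its use — cyclically reduced versus not, even versus odd length, pure powers — into a clean induction while keeping track of the cancellations that occur when a reduced word is cyclically rotated. Everything preceding it, namely the reduction of the generating set of $J$ and the identity $P_{\lambda r}-P_\lambda=P_vD(\lambda)-D(v\lambda)$, is routine manipulation with the trace identities of Section 1.
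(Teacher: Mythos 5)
Your proposal is correct, but it takes a genuinely different route from the paper. The paper's proof of Proposition \ref{prop} is essentially a citation: it invokes \cite[Prop 1.1]{pretzel-link} for the statement that the five differences $P_u-P_v$, $P_{ua}-P_{va}$, $P_{uw}-P_{vw}$, $P_{uaw}-P_{vaw}$, $P_{uwa}-P_{vwa}$ already generate the defining ideal, and then applies Lemma \ref{4} exactly once, via $(P_{uaw}-P_{vaw})+(P_{uwa}-P_{vwa})=-P_{aw^{-1}}(P_u-P_v)+P_a(P_{uw}-P_{vw})+P_w(P_{ua}-P_{va})$, to discard the generator $P_{uaw}-P_{vaw}$. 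You instead prove the full statement from scratch: the reduction of the defining ideal to the family $D(k)=P_{ku}-P_{kv}$ through telescoping and the identity $P_{\lambda r}-P_\lambda=P_v D(\lambda)-D(v\lambda)$ is correct, and your induction on reduced word length does close up --- I checked the recursions $D(gk)+D(g^{-1}k)=P_gD(k)$, $D(kg)+D(kg^{-1})=P_gD(k)$, the Lemma \ref{4} commutation identity, the base cases of length at most two (where $D(aw)$ is obtained from the generator $D(wa)$ by precisely the paper's one use of Lemma \ref{4}), the conjugation case $k=ghg^{-1}$, pure powers, and the cyclic-rotation step with the alternating telescope for odd length and the rotation-by-two trick for even length, which needs only that $2$ is invertible in $\BC$; every case calls only on strictly shorter words or on sums established directly by the identities. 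What your approach buys is self-containedness: you are in effect reproving the external result \cite[Prop 1.1]{pretzel-link}, which is exactly where you predicted the real work would lie. What the paper's approach buys is brevity: granted that citation, the proposition is a one-line consequence of Lemma \ref{4}.
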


\begin{proof}
By \cite[Prop 1.1]{pretzel-link}, the universal character ring of $G$ is the quotient of the polynomial ring $\BC[x,y,z]$ by the ideal generated by the five polynomials $P_u-P_v,~P_{ua}-P_{va},~P_{uw}-P_{vw},~P_{uaw}-P_{vaw}$ and $P_{uwa}-P_{vwa}.$ From Lemma \ref{4} it follows that
$$(P_{uaw}-P_{vaw})+(P_{uwa}-P_{vwa})=-P_{aw^{-1}}(P_u-P_v)+P_a(P_{uw}-P_{vw})+P_w(P_{ua}-P_{va}).$$
Hence the universal character ring of $G$ is the quotient of the polynomial ring $\BC[x,y,z]$ by the ideal generated by the four polynomials $P_u-P_v,~P_{ua}-P_{va},~P_{uw}-P_{vw}$ and $P_{uwa}-P_{vwa}$. 
\end{proof}

\begin{remark}
From the proof of \cite[Prop 1.1]{pretzel-link}, it is easy to see that the polynomial $P_{uwa}-P_{vwa}$ in Proposition \ref{prop} can be replaced by any polynomial of the form $P_{ug_1^{\varepsilon_1}g_2^{\varepsilon_2}}-P_{vg_1^{\varepsilon_1}g_2^{\varepsilon_2}}$, where $\{g_1,g_2\}=\{a,w\}$ and $\varepsilon_1, \varepsilon_2 \in \{\pm 1\}$.
\label{nx}
\end{remark}

\subsection{Proof of Theorem \ref{main1}} The group in Theorem \ref{main1} is $\la a,w \mid w^n\overleftarrow{r}=r^{-1}w^{n-1}\ra.$ 

To prove Theorem 1 we will need the following propositions.

\begin{proposition}
One has
$$P_{w^n\overleftarrow{r}\overleftarrow{u}}-P_{r^{-1}w^{n-1}\overleftarrow{u}}=(P_{w^n\overleftarrow{r}uw^{-1}}-P_{r^{-1}w^{n-1}uw^{-1}})-P_{uw^{n-1}}(P_{\overleftarrow{r}}-P_{r^{-1}w^{-1}}),$$
for all words $u$ in $F_{a,w}$.
\label{main1prop}
\end{proposition}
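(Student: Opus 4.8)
\emph{Proof proposal.} The plan is to eliminate the reversed word $\overleftarrow{u}$ by means of Lemma~\ref{zero}, and then to show, using only cyclicity of the trace together with the Cayley-Hamilton identities \eqref{tr3} and \eqref{Cayley}, that the whole proposition collapses to the single identity
$$P_{rw^n u}+P_{w^{-1}r^{-1}w^{n-1}u}=P_{w^{n-1}u}\,P_{r^{-1}w^{-1}}.\qquad(\star)$$

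First I would treat the left-hand side. Applying Lemma~\ref{zero} to the splittings $(w^n\overleftarrow{r})\cdot\overleftarrow{u}$ and $(r^{-1}w^{n-1})\cdot\overleftarrow{u}$ and using Lemma~\ref{def} (in particular $\overleftarrow{w^k}=w^k$ and $\overleftarrow{r^{-1}}=\overleftarrow{r}^{-1}$), one finds $P_{w^n\overleftarrow{r}\overleftarrow{u}}=P_{rw^n u}$ and $P_{r^{-1}w^{n-1}\overleftarrow{u}}=P_{w^{n-1}\overleftarrow{r}^{-1}u}$. On the right-hand side I would use only Identities~\eqref{tr1} and \eqref{tr2}: $P_{w^n\overleftarrow{r}uw^{-1}}=P_{w^{n-1}\overleftarrow{r}u}$, $P_{r^{-1}w^{n-1}uw^{-1}}=P_{w^{-1}r^{-1}w^{n-1}u}$, and $P_{uw^{n-1}}=P_{w^{n-1}u}$. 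Substituting all of this into the claimed identity and rearranging, the claim becomes equivalent to
$$P_{rw^n u}+P_{w^{-1}r^{-1}w^{n-1}u}=\big(P_{w^{n-1}\overleftarrow{r}u}+P_{w^{n-1}\overleftarrow{r}^{-1}u}\big)-P_{w^{n-1}u}\big(P_{\overleftarrow{r}}-P_{r^{-1}w^{-1}}\big).$$
Identity~\eqref{Cayley}, applied with $B=w^{n-1}$, $A=\overleftarrow{r}$, $C=u$, turns the parenthesized sum into $P_{\overleftarrow{r}}P_{w^{n-1}u}$, which cancels the term $-P_{w^{n-1}u}P_{\overleftarrow{r}}$; what survives is exactly $(\star)$.

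It then remains to prove $(\star)$, and this is one application of Identity~\eqref{tr3}, $P_{BA}+P_{BA^{-1}}=P_AP_B$, with $B=w^{n-1}u$ and $A=rw$. This gives $P_{w^{n-1}urw}+P_{w^{n-1}uw^{-1}r^{-1}}=P_{rw}P_{w^{n-1}u}$; cyclicity identifies $P_{w^{n-1}urw}=P_{rw^n u}$ and $P_{w^{n-1}uw^{-1}r^{-1}}=P_{w^{-1}r^{-1}w^{n-1}u}$, while $P_{rw}=P_{r^{-1}w^{-1}}$ by \eqref{tr1} and \eqref{tr2}.

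Every individual step is a routine trace manipulation; the only thing demanding attention is the bookkeeping. One must make sure, after invoking Lemma~\ref{zero}, that the two terms $P_{w^{n-1}\overleftarrow{r}u}$ and $P_{w^{n-1}\overleftarrow{r}^{-1}u}$ appear with a common prefix $w^{n-1}$ and a common suffix $u$, so that \eqref{Cayley} applies directly; and one must choose, among the several pairs $(A,B)$ with $P_A=P_{r^{-1}w^{-1}}$ and $P_B=P_{w^{n-1}u}$, the pair $A=rw$, $B=w^{n-1}u$ for which both $BA$ and $BA^{-1}$ are cyclically equal to words that actually occur in $(\star)$. I expect this matching of cyclic words to be the step where errors are most likely, although it is entirely mechanical.
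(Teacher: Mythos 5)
Your proposal is correct and is essentially the paper's own argument: the same use of Lemmas \ref{zero} and \ref{def} to eliminate the reversed words, followed by the same two Cayley--Hamilton manipulations (your application of \eqref{Cayley} with $B=w^{n-1}$, $A=\overleftarrow{r}$, $C=u$ and of \eqref{tr3} with $A=rw$, $B=w^{n-1}u$ correspond exactly to the paper's two applications of \eqref{tr3}). The only difference is presentational: you rearrange the claim and verify the equivalent identity $(\star)$, whereas the paper expands $P_{w^n\overleftarrow{r}\overleftarrow{u}}$ and $P_{r^{-1}w^{n-1}\overleftarrow{u}}$ separately and subtracts.
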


\begin{proof}
We have
\begin{align*}
P_{w^n\overleftarrow{r}\overleftarrow{u}} &= P_{\overleftarrow{w^n}\overleftarrow{\overleftarrow{r}\overleftarrow{u}}} & \text{by Lemma~}\ref{zero}\\
&= P_{w^nur} & \text{by Lemma~}\ref{def}\\
&= P_{(w^{n-1}u)(rw)} \\
&= P_{w^{n-1}u} P_{rw} -P_{(w^{n-1}u)(rw)^{-1}}& \text{by Identity~}\eqref{tr3}\\
&= P_{uw^{n-1}} P_{r^{-1}w^{-1}}-P_{r^{-1}w^{n-1}uw^{-1}} & \text{by Identities~}\eqref{tr1} \text{~and~}\eqref{tr2}
\end{align*}
Similarly,
\begin{align*}
P_{r^{-1}w^{n-1}\overleftarrow{u}} &= P_{\overleftarrow{r^{-1}}\overleftarrow{w^{n-1}\overleftarrow{u}}} & \text{by Lemma~}\ref{zero}\\
&= P_{\overleftarrow{r}^{-1} uw^{n-1}} & \text{by Lemma~}\ref{def}\\
&= P_{uw^{n-1}}P_{\overleftarrow{r}}-P_{\overleftarrow{r}uw^{n-1}} & \text{by Identity~}\eqref{tr3}\\
&= P_{uw^{n-1}}P_{\overleftarrow{r}}-P_{w^n\overleftarrow{r}uw^{-1}} & \text{by Identities~}\eqref{tr1} \text{~and~}\eqref{tr2}
\end{align*}
Hence $P_{w^n\overleftarrow{r}\overleftarrow{u}}-P_{r^{-1}w^{n-1}\overleftarrow{u}}=(P_{w^n\overleftarrow{r}uw^{-1}}-P_{r^{-1}w^{n-1}uw^{-1}})-P_{uw^{n-1}}(P_{\overleftarrow{r}}-P_{r^{-1}w^{-1}}).$
\end{proof}

\begin{proposition}
One has $$P_{w^n\overleftarrow{r}}-P_{r^{-1}w^{n-1}}=-(S_{n-1}(y)+S_{n-2}(y))(P_{\overleftarrow{r}}-P_{r^{-1}w^{-1}}).$$
\label{1}
\end{proposition}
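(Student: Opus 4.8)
The plan is to prove Proposition \ref{1} by relating $P_{w^n\overleftarrow{r}}$ and $P_{r^{-1}w^{n-1}}$ to the corresponding $n=0$ (or $n=1$) quantities via a linear recursion in $n$ whose characteristic polynomial is $t^2-yt+1$, so that the solution is expressed through the Chebyshev polynomials $S_k(y)$. Concretely, I would first multiply the relevant words by powers of $w$ and apply trace identity \eqref{tr3} in the form $P_{w^{k+1}g} = P_w P_{w^k g} - P_{w^{k-1} g}$, valid for any word $g$; this shows that both sequences $n \mapsto P_{w^n\overleftarrow{r}}$ and $n \mapsto P_{r^{-1}w^{n-1}}$ satisfy the same second-order recurrence as $S_n(y)$, hence so does their difference $D_n := P_{w^n\overleftarrow{r}} - P_{r^{-1}w^{n-1}}$.

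Once $D_n$ is known to satisfy $D_{n+1} = y D_n - D_{n-1}$, the general theory of such recurrences gives $D_n = S_{n-1}(y) D_1 - S_{n-2}(y) D_0$ (using $S_{-1}=0$, $S_0 = 1$, $S_1 = y$ as normalizations consistent with the paper's convention). So the next step is to compute the two base cases $D_0 = P_{\overleftarrow{r}} - P_{r^{-1}w^{-1}}$ and $D_1 = P_{w\overleftarrow{r}} - P_r$, and to show that $D_1 = -(P_{\overleftarrow{r}} - P_{r^{-1}w^{-1}})$, i.e. $D_1 = -D_0$. This identity is where the palindrome/backward structure enters: using Lemma \ref{zero} one has $P_{w\overleftarrow{r}} = P_{\overleftarrow{w}\overleftarrow{\overleftarrow{r}}} = P_{\overleftarrow{r} w}$ wait — more carefully, $P_{w\overleftarrow r} = P_{\overleftarrow{\overleftarrow r} \overleftarrow w} = P_{rw}$ is wrong since $\overleftarrow w = w$; rather $\overleftarrow{w\overleftarrow r} = \overleftarrow{\overleftarrow r}\,\overleftarrow w = r w$, so Lemma \ref{zero} gives $P_{w\overleftarrow r} = P_{rw}$. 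Then trace identity \eqref{tr3} applied to $P_{rw}$ and $P_{r^{-1}w^{-1}}$, together with \eqref{tr1}, should collapse $D_1 + D_0$ to zero; I would also invoke Lemma \ref{zero} on $P_{r^{-1}w^{-1}}$ to match everything up.

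Substituting $D_1 = -D_0$ into $D_n = S_{n-1}(y)D_1 - S_{n-2}(y)D_0$ yields $D_n = -(S_{n-1}(y) + S_{n-2}(y))D_0$, which is exactly the claimed formula. Alternatively, and perhaps more in the spirit of the surrounding lemmas, I could set $u = w^{-1}$ (or a similar short word) in Proposition \ref{main1prop} and iterate: that proposition already packages one step of a $w$-shift, and repeated application should telescope into the Chebyshev sum directly, with Lemma \ref{hom} available if a closed form with products of $S_k$'s needs simplifying. Either route reduces the statement to the single base identity $D_1 = -D_0$.

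The main obstacle I anticipate is pinning down the correct index conventions for the Chebyshev polynomials at the two ends — in particular making sure the solution formula $D_n = S_{n-1}D_1 - S_{n-2}D_0$ is consistent with the paper's convention that $S_k$ is defined for all integers $k$ (so $S_{-1} = 0$, $S_{-2} = -1$), and that it holds for negative $n$ as well, since Theorem \ref{main1} is asserted for all integers $n$. The algebra itself (the applications of \eqref{tr1}--\eqref{Cayley} and Lemmas \ref{def}, \ref{zero}) is routine; the only genuinely content-bearing step is the reflection identity $P_{w\overleftarrow r} = P_{r w}$ coming from Lemma \ref{zero}, which is what forces $D_1 = -D_0$ rather than some other linear combination.
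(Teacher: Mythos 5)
Your proposal is correct and follows essentially the same route as the paper: establish the recursion $D_{n+1}=yD_n-D_{n-1}$ from Identity \eqref{tr3}, reduce to the base cases, and show $D_1=-D_0$ via Lemma \ref{zero} together with \eqref{tr1}--\eqref{tr2} (the paper writes $g_1=P_{w\overleftarrow{r}}-P_{r^{-1}}=P_{wr}-P_r=P_{r^{-1}w^{-1}}-P_{\overleftarrow{r}}$), then conclude by the Chebyshev solution formula, which is just the paper's induction in closed form. The only cosmetic difference is that the paper needs no use of \eqref{tr3} in the base-case computation, and its induction automatically covers all integers $n$ under the stated convention for $S_k$, exactly as you anticipated.
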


\begin{proof}

Let $g_n=P_{w^n\overleftarrow{r}}-P_{r^{-1}w^{n-1}}$. By applying Identity \eqref{tr3}, it is easy to show that $g_{n+1}=yg_n-g_{n-1}$ for all integers $n$ (note that $P_w=y$). By definition, $g_0=P_{\overleftarrow{r}}-P_{r^{-1}w^{-1}}$. Applying Lemmas \ref{def}, \ref{zero} and Identity \eqref{tr1}, we get 
$$g_1=P_{w\overleftarrow{r}}-P_{r^{-1}}=P_{wr}-P_{r}=P_{r^{-1}w^{-1}}-P_{\overleftarrow{r}}=-(P_{\overleftarrow{r}}-P_{r^{-1}w^{-1}}).$$
Hence, by induction on $n$, we can easily show that $g_n=-(S_{n-1}(y)+S_{n-2}(y))(P_{\overleftarrow{r}}-P_{r^{-1}w^{-1}})$. Proposition \ref{1} follows.
\end{proof}

We now prove Theorem \ref{main1}. Let $f_n(u)=P_{w^n\overleftarrow{r}u}-P_{r^{-1}w^{n-1}u}$ for $u \in F_{a,w}$. Then, by Proposition \ref{prop}, the universal character ring of the group $G=\la a,w \mid w^n \overleftarrow{r}=r^{-1}w^{n-1} \ra$ is the quotient of the polynomial ring $\BC[x,y,z]$ by the ideal $I$ generated by the four polynomials $f_n(1),~f_n(a),~f_n(w)$ and $f_n(wa)$, where $x=P_a,~y=P_w$ and $z=P_{aw}$.

Let $Q=P_{\overleftarrow{r}}-P_{r^{-1}w^{-1}}$. Since $\overleftarrow{r}$ and $r^{-1}w^{-1}$ are conjugate in $G$ (by $w^n$), it is clear from the definition of the universal character ring that $Q$ is contained in the ideal $I$.

By Proposition \ref{main1prop},
$$f_n(\overleftarrow{u})=f_n(uw^{-1})-P_{uw^{n-1}}Q.$$
In particular, we have $f_n(w)=f_n(1)-P_{w^n}Q$ and $f_n(wa)=f_n(a)-P_{aw^n}Q.$ By Proposition \ref{1}, $f_n(1)=-(S_{n-1}(y)+S_{n-2}(y))Q$. Hence the ideal $I$ is generated by the two polynomials $Q=P_{\overleftarrow{r}}-P_{r^{-1}w^{-1}}$ and $f_n(a)=P_{w^n \overleftarrow{r}a}-P_{r^{-1}w^{n-1}a}$. Theorem \ref{main1} follows.

\subsection{Proof of Theorem \ref{main2}} The group in Theorem \ref{main2} is $\la a,w \mid w^n\overleftarrow{r}=r^{-1}w^{n-2}\ra.$

To prove Theorem 2 we will need the following propositions.

\begin{proposition}
One has
$$P_{w^n\overleftarrow{r}\overleftarrow{u}}-P_{r^{-1}w^{n-2}\overleftarrow{u}}=P_{w^n\overleftarrow{r}(wuw^{-1})}-P_{r^{-1}w^{n-2}(wuw^{-1})}$$
for all words $u$ in $F_{a,w}$.
\label{key}
\end{proposition}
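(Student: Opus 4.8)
The plan is to mimic the proof of Proposition~\ref{main1prop}, which establishes the analogous identity in the $w^{n-1}$ case, using the backward operator identities from Lemma~\ref{def}, the palindrome--trace identity of Lemma~\ref{zero}, and the basic trace identities \eqref{tr1}--\eqref{tr3}. The goal is to rewrite both $P_{w^n\overleftarrow{r}\,\overleftarrow{u}}$ and $P_{r^{-1}w^{n-2}\overleftarrow{u}}$ so that the same ``mixed'' term appears in each, and the remaining terms cancel. Since on the right-hand side the conjugate word $wuw^{-1}$ appears (rather than $uw^{-1}$ as in Proposition~\ref{main1prop}), I expect the bookkeeping to be slightly different but the structure identical.

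First I would handle the left term. Using Lemma~\ref{zero} we have $P_{w^n\overleftarrow{r}\,\overleftarrow{u}}=P_{\overleftarrow{w^n}\,\overleftarrow{\overleftarrow{r}\,\overleftarrow{u}}}$, and then by Lemma~\ref{def} (which gives $\overleftarrow{\overleftarrow{r}\,\overleftarrow{u}}=\overleftarrow{\overleftarrow{u}}\,\overleftarrow{\overleftarrow{r}}=ur$ and $\overleftarrow{w^n}=w^n$), this equals $P_{w^n u r}$. Now write $w^n u r=(w^{n-2}u)(w^2 r)$ and apply \eqref{tr3}: $P_{w^n u r}=P_{w^{n-2}u}\,P_{w^2 r}-P_{(w^{n-2}u)(w^2 r)^{-1}}$. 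Using \eqref{tr1} and \eqref{tr2}, $P_{w^2 r}=P_{r^{-1}w^{-2}}$ and $P_{(w^{n-2}u)(w^2 r)^{-1}}=P_{r^{-1}w^{-2}w^{n-2}u}=P_{r^{-1}w^{n-4}u}$; also $P_{w^{n-2}u}=P_{uw^{n-2}}$. Hmm --- I should double-check which grouping produces exactly $P_{w^n\overleftarrow{r}(wuw^{-1})}$ on the nose; the natural choice is to split as $(w^{n-1}u w^{-1})(w^{?}r w^{?})$ or similar, so that after applying Lemma~\ref{zero} backward to the mixed term I recover $w^n\overleftarrow{r}(wuw^{-1})$. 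Concretely, I expect to split $w^n u r = (w^{n-1}uw^{-1})\cdot (w r w)$ and then read the product term via \eqref{tr3}.

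Second I would handle the right term $P_{r^{-1}w^{n-2}\overleftarrow{u}}$ in parallel: by Lemma~\ref{zero} it equals $P_{\overleftarrow{r^{-1}}\,\overleftarrow{w^{n-2}\overleftarrow{u}}}=P_{\overleftarrow{r}^{-1}\,u\,w^{n-2}}$ (using Lemma~\ref{def} and $\overleftarrow{w^{n-2}}=w^{n-2}$). Then apply \eqref{tr3} after grouping, e.g. $\overleftarrow{r}^{-1}(uw^{n-2})$, to get $P_{uw^{n-2}}P_{\overleftarrow{r}}-P_{\overleftarrow{r}\,uw^{n-2}}$, and rewrite $P_{\overleftarrow{r}\,uw^{n-2}}$ using \eqref{tr1},\eqref{tr2} as $P_{w^{2}\,\overleftarrow{r}\,uw^{n-2}\cdot w^{-2}}$-type expression --- more precisely, as $P_{w^n\overleftarrow{r}(wuw^{-1})}$ after the appropriate cyclic rearrangement and choice of how the $w^n$ factor is distributed. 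The key is that the same mixed trace $P_{w^n\overleftarrow{r}(wuw^{-1})}$ and $P_{r^{-1}w^{n-2}(wuw^{-1})}$ emerge from the two computations, and the ``product'' terms $P_{uw^{n-2}}P_{r^{-1}w^{-2}}$ and $P_{uw^{n-2}}P_{\overleftarrow{r}}$ cancel against each other --- which explains why, unlike Proposition~\ref{main1prop}, there is no leftover $P_{uw^{n-2}}(P_{\overleftarrow{r}}-P_{r^{-1}w^{-2}})$ term here: in the $w^{n-2}$ case the relevant powers of $w$ align so that the factor multiplying $(P_{\overleftarrow{r}}-P_{r^{-1}w^{-2}})$ vanishes identically.

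The main obstacle I anticipate is purely organizational: choosing, in each of the two computations, the grouping of the word into a product of two subwords so that \eqref{tr3} produces \emph{exactly} the terms $P_{w^n\overleftarrow{r}(wuw^{-1})}$ and $P_{r^{-1}w^{n-2}(wuw^{-1})}$ after applying \eqref{tr1},\eqref{tr2} and Lemma~\ref{zero}, with no stray power of $w$ left over. Once the two groupings are chosen consistently, the identity follows by subtracting the two resulting expressions and observing the cancellation; there is no genuine difficulty beyond tracking exponents of $w$ carefully. I would present the argument as two aligned computations (as in the proof of Proposition~\ref{main1prop}) followed by the one-line subtraction.
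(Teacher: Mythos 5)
Your overall strategy (two parallel computations in the style of Proposition \ref{main1prop}, followed by a one-line subtraction) is indeed the paper's, but the mechanism you give for the final cancellation is wrong, and that is exactly the one nontrivial point of the proof. You claim the product terms come out as $P_{uw^{n-2}}P_{r^{-1}w^{-2}}$ and $P_{uw^{n-2}}P_{\overleftarrow{r}}$ and that they cancel because ``the relevant powers of $w$ align''; but $P_{\overleftarrow{r}}-P_{r^{-1}w^{-2}}$ is not identically zero --- it is precisely the generator $Q$ of the ideal in Theorem \ref{main2} --- so if those really were the product terms you would be left with a genuinely nonzero residual $P_{uw^{n-2}}(P_{\overleftarrow{r}}-P_{r^{-1}w^{-2}})$ and the proposition as you argue it would fail. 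Moreover, the groupings you sketch do not produce the stated mixed terms: splitting $w^nur$ as $(w^{n-2}u)(w^2r)$ gives the mixed trace $P_{r^{-1}w^{n-4}u}$, splitting it as $(w^{n-1}uw^{-1})(wrw)$ gives $P_{r^{-1}w^{n-2}uw^{-2}}$, and grouping the other side as $\overleftarrow{r}^{-1}\cdot(uw^{n-2})$ gives $P_{\overleftarrow{r}uw^{n-2}}$; none of these is cyclically equal to $r^{-1}w^{n-2}(wuw^{-1})$, respectively $w^n\overleftarrow{r}(wuw^{-1})$, in general.

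The missing idea is to choose the groupings so that both product terms carry the common factor $P_{uw^{n-1}}$. For the first term use exactly the computation already done in Proposition \ref{main1prop}: $P_{w^n\overleftarrow{r}\overleftarrow{u}}=P_{w^nur}=P_{(w^{n-1}u)(rw)}=P_{rw}P_{uw^{n-1}}-P_{r^{-1}w^{n-2}(wuw^{-1})}$, noting $r^{-1}w^{n-2}\cdot wuw^{-1}=r^{-1}w^{n-1}uw^{-1}$. For the second term rewrite $P_{r^{-1}w^{n-2}\overleftarrow{u}}=P_{\overleftarrow{r}^{-1}uw^{n-2}}=P_{(\overleftarrow{r}w)^{-1}(uw^{n-1})}=P_{\overleftarrow{r}w}P_{uw^{n-1}}-P_{w^n\overleftarrow{r}(wuw^{-1})}$. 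Subtracting, the residual is $P_{uw^{n-1}}\bigl(P_{rw}-P_{\overleftarrow{r}w}\bigr)$, and it vanishes not because exponents of $w$ align but because $P_{rw}=P_{\overleftarrow{r}\overleftarrow{w}}=P_{\overleftarrow{r}w}$ by Lemma \ref{zero}. So Lemma \ref{zero} must be invoked twice: once to reverse the words at the start (as you do) and once, crucially, at the end to kill the residual; as written, your proposal has a genuine gap at this second step.
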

\begin{proof} From the proof of Proposition \ref{main1prop} we have
\begin{eqnarray*}
P_{w^n\overleftarrow{r}\overleftarrow{u}} &=& P_{rw}P_{uw^{n-1}}-P_{r^{-1}w^{n-2}(wuw^{-1})}.
\end{eqnarray*}
Similarly,
\begin{align*}
P_{r^{-1}w^{n-2}\overleftarrow{u}} &= P_{\overleftarrow{r^{-1}}\overleftarrow{w^{n-2}\overleftarrow{u}}} & \text{by Lemma~}\ref{zero}\\
&= P_{\overleftarrow{r}^{-1}uw^{n-2}} & \text{by Lemma~}\ref{def}\\
&= P_{(\overleftarrow{r}w)^{-1}(uw^{n-1})} & \text{by Identity~}\eqref{tr2}\\
&= P_{\overleftarrow{r}w}P_{uw^{n-1}}-P_{w^n\overleftarrow{r}(wuw^{-1})} & \text{by Identities ~}\eqref{tr1} \text{~and~} \eqref{tr2}
\end{align*}
Hence $$P_{w^n\overleftarrow{r}\overleftarrow{u}}-P_{r^{-1}w^{n-2}\overleftarrow{u}}=P_{w^n\overleftarrow{r}(wuw^{-1})}-P_{r^{-1}w^{n-2}(wuw^{-1})}+P_{uw^{n-1}}(P_{rw}-P_{\overleftarrow{r}w}).$$
The proposition follows, since $P_{rw}-P_{\overleftarrow{r}w}=0$ by Lemma \ref{zero}.
\end{proof}

\begin{proposition}
One has 
$$(P_{w^n\overleftarrow{r}u}-P_{r^{-1}w^{n-2}u})+(P_{w^n\overleftarrow{r}(wuw^{-1})}-P_{r^{-1}w^{n-2}(wuw^{-1})})$$
$$= -P_{uw^{-2}}(P_{w^n\overleftarrow{r}}-P_{r^{-1}w^{n-2}})+P_{uw^{-1}}(P_{w^n\overleftarrow{r}w}-P_{r^{-1}w^{n-2}w})+P_{w}(P_{w^n\overleftarrow{r}(uw^{-1})}-P_{r^{-1}w^{n-2}(uw^{-1})})$$
for all words $u$ in $F_{a,w}$.
\label{them}
\end{proposition}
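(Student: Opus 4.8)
The plan is to obtain the identity from a single application of Lemma \ref{4}, invoked twice with different ``base words''. First I would specialize, in Lemma \ref{4}, the letters $c = w$ and $d = uw^{-1}$. Then $cd^{-1} = w(uw^{-1})^{-1} = w^2u^{-1}$, so that $P_{cd^{-1}} = P_{w^2u^{-1}} = P_{uw^{-2}}$ by Identity \eqref{tr1}; moreover, for \emph{any} word $U$ in $F_{a,w}$ one has $Ucd = Uwuw^{-1}$ while $Udc = Uuw^{-1}w = Uu$. Hence Lemma \ref{4}, with its ``$u$'' replaced by $U$, becomes
$$P_{Uwuw^{-1}} + P_{Uu} = -P_{uw^{-2}}\,P_U + P_w\,P_{Uuw^{-1}} + P_{uw^{-1}}\,P_{Uw}.$$

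Next I would apply this last identity twice: once with $U = w^n\overleftarrow{r}$ and once with $U = r^{-1}w^{n-2}$, and subtract the second instance from the first. Since the coefficient polynomials $P_{uw^{-2}}$, $P_w$, $P_{uw^{-1}}$ do not involve $U$, they factor out of the difference, and the left-hand side collapses to $(P_{w^n\overleftarrow{r}u} - P_{r^{-1}w^{n-2}u}) + (P_{w^n\overleftarrow{r}(wuw^{-1})} - P_{r^{-1}w^{n-2}(wuw^{-1})})$, while the right-hand side becomes precisely $-P_{uw^{-2}}(P_{w^n\overleftarrow{r}} - P_{r^{-1}w^{n-2}}) + P_{uw^{-1}}(P_{w^n\overleftarrow{r}w} - P_{r^{-1}w^{n-2}w}) + P_w(P_{w^n\overleftarrow{r}(uw^{-1})} - P_{r^{-1}w^{n-2}(uw^{-1})})$, which is the asserted equality.

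I do not expect a genuine obstacle here: the entire point is the bookkeeping choice $c = w$, $d = uw^{-1}$, arranged so that the ``cross term'' $Udc$ telescopes to $Uu$, matching the summand $P_{w^n\overleftarrow{r}u} - P_{r^{-1}w^{n-2}u}$ in Proposition \ref{them}. The only steps requiring a moment's attention are the use of \eqref{tr1} to replace $P_{w^2u^{-1}}$ by $P_{uw^{-2}}$, and the observation that the three coefficient polynomials are independent of $U$, so that the subtraction is legitimate term by term.
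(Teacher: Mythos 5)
Your proof is correct and takes essentially the same route as the paper: the paper likewise applies Lemma \ref{4} with the pair $\{c,d\}=\{uw^{-1},w\}$ to a generic base word $v$ and then subtracts the two instances $v=w^n\overleftarrow{r}$ and $v=r^{-1}w^{n-2}$. Your choice $c=w$, $d=uw^{-1}$ differs only by swapping the roles of $c$ and $d$, which you handle correctly via Identity \eqref{tr1}.
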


\begin{proof}
By Lemma \ref{4}, for any word $v$ in $F_{a,w}$, we have
\begin{eqnarray*}
P_{vu}+P_{v(wuw^{-1})} &=& P_{v(uw^{-1})w}+P_{vw(uw^{-1})}\\
&=& -P_{uw^{-2}}P_v+P_{uw^{-1}}P_{vw}+P_{w}P_{v(uw^{-1})}.
\end{eqnarray*}
In particular,
\begin{eqnarray*}
P_{w^n\overleftarrow{r}u}+P_{w^n\overleftarrow{r}(wuw^{-1})} &=& -P_{uw^{-2}}P_{w^n\overleftarrow{r}}+P_{uw^{-1}}P_{w^n\overleftarrow{r}w}+P_{w}P_{w^n\overleftarrow{r}(uw^{-1})},\\
P_{r^{-1}w^{n-2}u}+P_{r^{-1}w^{n-2}(wuw^{-1})} &=& -P_{uw^{-2}}P_{r^{-1}w^{n-2}}+P_{uw^{-1}}P_{r^{-1}w^{n-2}w}+P_{w}P_{r^{-1}w^{n-2}(uw^{-1})}.
\end{eqnarray*}
The proposition follows by taking the difference of the two identities above.
\end{proof}

\begin{proposition}
One has 
\begin{eqnarray*}
P_{w^n\overleftarrow{r}}-P_{r^{-1}w^{n-2}} &=& -S_{n-2}(y)(P_{\overleftarrow{r}}-P_{r^{-1}w^{-2}}),\\
P_{w^n\overleftarrow{r}w}-P_{r^{-1}w^{n-2}w} &=& -S_{n-1}(y)(P_{\overleftarrow{r}}-P_{r^{-1}w^{-2}}).
\end{eqnarray*}
\label{2}
\end{proposition}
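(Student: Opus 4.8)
The plan is to run the same argument as for Proposition \ref{1}. Set $h_n := P_{w^n\overleftarrow{r}}-P_{r^{-1}w^{n-2}}$. First I would establish the three-term recursion $h_{n+1}=yh_n-h_{n-1}$ for every integer $n$: applying Identity \eqref{tr3} with $A=w$ first to $B=w^n\overleftarrow{r}$ and then to $B=r^{-1}w^{n-2}$, and using the cyclic invariance \eqref{tr2} to move the trailing $w^{\pm1}$ to the front, gives
\[
P_{w^{n+1}\overleftarrow{r}}+P_{w^{n-1}\overleftarrow{r}}=yP_{w^n\overleftarrow{r}},\qquad P_{r^{-1}w^{n-1}}+P_{r^{-1}w^{n-3}}=yP_{r^{-1}w^{n-2}},
\]
and subtracting the second from the first yields $h_{n+1}=yh_n-h_{n-1}$. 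Then I would record the initial data: $h_0=P_{\overleftarrow{r}}-P_{r^{-1}w^{-2}}$ holds by definition, and $h_1=P_{w\overleftarrow{r}}-P_{r^{-1}w^{-1}}=0$, since by Lemmas \ref{def} and \ref{zero} together with \eqref{tr1} and \eqref{tr2} one has $P_{w\overleftarrow{r}}=P_{wr}=P_{(wr)^{-1}}=P_{r^{-1}w^{-1}}$.

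To finish the first identity I would use that $S_{-1}(y)=0$ and $S_{-2}(y)=-1$, so the sequence $n\mapsto -S_{n-2}(y)$ satisfies the Chebyshev recursion and takes the values $1$ at $n=0$ and $0$ at $n=1$; since a second-order linear recursion over $\BZ$ is determined by two consecutive terms, comparing with $h_0,h_1$ gives $h_n=-S_{n-2}(y)\,(P_{\overleftarrow{r}}-P_{r^{-1}w^{-2}})$ for all integers $n$, by an induction running both upward and downward from $n=0,1$. The second identity is then immediate: by \eqref{tr2} we have $P_{w^n\overleftarrow{r}w}=P_{w^{n+1}\overleftarrow{r}}$ and $P_{r^{-1}w^{n-2}w}=P_{r^{-1}w^{n-1}}$, so the left-hand side of the second identity equals $h_{n+1}=-S_{n-1}(y)\,(P_{\overleftarrow{r}}-P_{r^{-1}w^{-2}})$.

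I do not expect a genuine obstacle here; the argument is routine and closely parallels Proposition \ref{1}. The only points that require care are the bookkeeping with Chebyshev polynomials at negative indices, so that the closed form truly matches the initial values $h_0$ and $h_1$, and the exponent shifts in the two trace recursions, which must line up so that the difference telescopes to $yh_n-h_{n-1}$; a sign slip in either place would corrupt the whole computation.
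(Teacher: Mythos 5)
Your proof is correct and is exactly the argument the paper intends: the paper's own ``proof'' of this proposition simply says it is similar to that of Proposition \ref{1} and omits the details, and your recursion $h_{n+1}=yh_n-h_{n-1}$ with initial data $h_0=P_{\overleftarrow{r}}-P_{r^{-1}w^{-2}}$, $h_1=0$ (and $S_{-1}=0$, $S_{-2}=-1$) is precisely that adaptation, with the second identity correctly read off as $h_{n+1}$ via cyclic invariance.
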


\begin{proof}
The proof is similar to that of Proposition \ref{1}, so we omit the details.
\end{proof}

We now prove Theorem 2. Let $f_n(u)=P_{w^n\overleftarrow{r}u}-P_{r^{-1}w^{n-2}u}.$ Then, by Proposition \ref{prop} and Remark \ref{nx}, the universal character ring of the group $G=\la a,w \mid w^n\overleftarrow{r}=r^{-1}w^{n-2} \ra$ is the quotient of the polynomial ring $\BC[x,y,z]$ by the ideal $I$ generated by the four polynomials $f_n(1),~f_n(a),~f_n(w)$ and $f_n(aw^{-1})$, where $x=P_a,~y=P_w$ and $z=P_{aw}$.

By Proposition \ref{key}, $f_n(\overleftarrow{u})=f_n(wuw^{-1})$. Moreover, we have 
$$f_n(u)+f_n(wuw^{-1}) = -P_{uw^{-2}} f_n(1)+P_{uw^{-1}}f_n(w) + P_{w}f_n(uw^{-1})$$
by Proposition \ref{them}. Hence 
$$f_n(u)+f_n(\overleftarrow{u})= -P_{uw^{-2}} f_n(1)+P_{uw^{-1}}f_n(w) + P_{w}f_n(uw^{-1}).$$
In particular, 
\begin{equation}
2f_n(a)=-P_{aw^{-2}}f_n(1)+ P_{aw^{-1}}f_n(w)+ P_w f_n(aw^{-1}).
\label{a}
\end{equation}

Let $Q=P_{\overleftarrow{r}}-P_{r^{-1}w^{-2}}$. Since $\overleftarrow{r}$ and $r^{-1}w^{-2}$ are conjugate in $G$ (by $w^n$), it is clear that from the definition of the universal character ring that $Q$ is contained in the ideal $I$.

By Proposition \ref{2}, $f_n(1)=-S_{n-2}(y)Q$ and $f_n(w)=-S_{n-1}(y)Q$. These identities and Identity \eqref{a} imply that the ideal $I$ is generated by $Q$ and $f_n(aw^{-1})$, and so the universal character ring of $G$ is the quotient of the polynomial ring $\BC[x,y,z]$ by the ideal generated by the two polynomials $P_{\overleftarrow{r}}-P_{r^{-1}w^{-2}}$ and $P_{w^n\overleftarrow{r}aw^{-1}}-P_{r^{-1}w^{n-2}aw^{-1}}.$

\section{Pretzel knots} 

\subsection{Proof of Theorem \ref{pretzelknot}} The fundamental group of the $(-2,2m+1,2n+1)$-pretzel knot is $$
\pi=\la a,b,c \mid bab^{-1}=(ac)^{-m}c(ac)^m,~a^{-1}ba=(cb)^nc(cb)^{-n}\ra.
$$
The first relation in the group $\pi$ is $(ac)^m ba=c(ac)^m b$, i.e. $a(ca)^{m-1}cba=ca(ca)^{m-1}cb.$ Let $w=(ca)^{m-1}cb$ then $awa=caw$. It implies that $ca=awaw^{-1}$ and $cb=(ca)^{1-m}w=(awaw^{-1})^{1-m}w.$ Let $u=(awaw^{-1})^{1-m}w$. Then $cb=u$ and so $$b=c^{-1}u=awa^{-1}w^{-1}a^{-1}(awaw^{-1})^{1-m}w=a(awaw^{-1})^{-m}w.$$ The second relation in the group $\pi$ becomes $(awaw^{-1})^{-m}wa=u^nawaw^{-1}a^{-1}u^{-n},$ which is equivalent to $$(awaw^{-1}a^{-1}u^{-n})^{-1}=(u^{-n}(awaw^{-1})^{-m}wa)^{-1}.$$ Therefore
$$\pi=\la a, w \mid u^{n}awa^{-1}w^{-1}a^{-1}=a^{-1}w^{-1}awau^{n-1} \ra.$$

\begin{figure}[htpb]
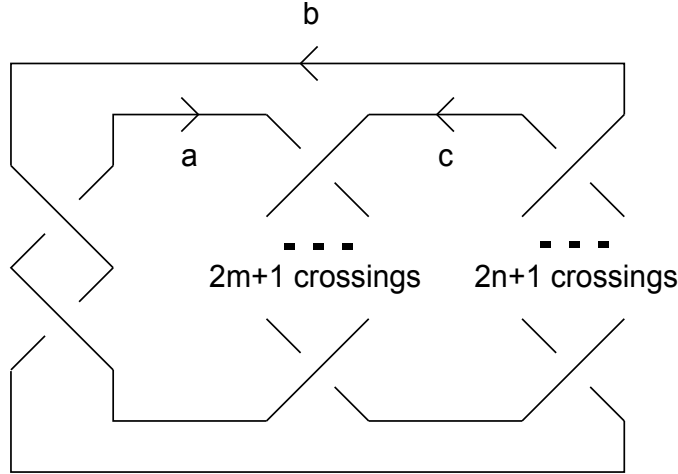

$$\psdraw{pretzelknot}{3.5in}$$
\caption{The $(-2,2m+1,2n+1)$-pretzel knot}
\end{figure}

To proceed, we will need the following lemma.

\begin{lemma} One has $u=\overleftarrow{s}ws$ where
$$s=\begin{cases} a(w^{-1}awa)^{-l} & \text{if } \quad m=2l,\\
(w^{-1}awa)^{-l} \quad & \text{if } \quad m=2l+1.
             \end{cases}$$
In particular, $u$ is palindromic, i.e. $u=\overleftarrow{u}.$
\label{u}
\end{lemma}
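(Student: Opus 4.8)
The plan is to reduce the whole statement to one conjugation identity after a convenient substitution. Write $v := awaw^{-1}$, so that $u = v^{1-m}w$ by definition. Using Lemma \ref{def} I would first record
$$\overleftarrow{v} = \overleftarrow{awaw^{-1}} = \overleftarrow{w^{-1}}\,\overleftarrow{a}\,\overleftarrow{w}\,\overleftarrow{a} = w^{-1}awa,$$
so that $s$ is precisely $\overleftarrow{v}^{-l}$ when $m = 2l+1$ and $a\,\overleftarrow{v}^{-l}$ when $m = 2l$ (here $\overleftarrow{v}^{-l} = \overleftarrow{v^{-l}}$ again by Lemma \ref{def}). Applying $\overleftarrow{\cdot}$ once more, together with $\overleftarrow{uv} = \overleftarrow{v}\,\overleftarrow{u}$ and $\overleftarrow{a} = a$, gives $\overleftarrow{s} = v^{-l}$ when $m = 2l+1$ and $\overleftarrow{s} = v^{-l}a$ when $m = 2l$.

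The only genuinely group-theoretic input needed is the conjugation relation
$$w\,\overleftarrow{v}\,w^{-1} = w\,(w^{-1}awa)\,w^{-1} = awaw^{-1} = v,$$
whence $w\,\overleftarrow{v}^{-l} = v^{-l}w$ for every integer $l$. I would also note the trivial identity $awa = (awaw^{-1})w = vw$. With these in hand the verification splits into the two parity cases. If $m = 2l+1$, then $\overleftarrow{s}ws = v^{-l}\,w\,\overleftarrow{v}^{-l} = v^{-l}v^{-l}w = v^{-2l}w = v^{1-m}w = u$. If $m = 2l$, then $\overleftarrow{s}ws = v^{-l}(awa)\overleftarrow{v}^{-l} = v^{-l}(vw)\overleftarrow{v}^{-l} = v^{1-l}\,w\,\overleftarrow{v}^{-l} = v^{1-l}v^{-l}w = v^{1-2l}w = v^{1-m}w = u$. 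This establishes $u = \overleftarrow{s}ws$.

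Finally, the palindromicity of $u$ is immediate from Lemma \ref{def}: $\overleftarrow{u} = \overleftarrow{\overleftarrow{s}ws} = \overleftarrow{s}\,\overleftarrow{w}\,\overleftarrow{\overleftarrow{s}} = \overleftarrow{s}ws = u$. The computation is entirely mechanical once the substitution $v = awaw^{-1}$ is made; the only step requiring a moment's attention is keeping the action of $\overleftarrow{\cdot}$ on the products and inverses defining $s$ straight across the two parity cases, and observing that the relation $w\,\overleftarrow{v}\,w^{-1} = v$ is exactly what makes the two ``halves'' $v^{-l}$ and $\overleftarrow{v}^{-l}$ fit together around the central letter $w$. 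I do not anticipate any real obstacle.
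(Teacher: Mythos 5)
Your proof is correct and takes essentially the same route as the paper: a direct free-group computation split by the parity of $m$, with your conjugation identity $w\,\overleftarrow{v}\,w^{-1}=v$ playing exactly the role of the paper's re-bracketing identity $(uv)^{k+1}=u(vu)^{k}v$. All steps, including the final palindromicity deduction via Lemma \ref{def}, check out.
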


\begin{proof} We first note that $(uv)^{k+1}=u(vu)^{k}v$ for all integers $k$. If $m=2l$ then 
\begin{eqnarray*}
u &=& (awaw^{-1})^{-l}(awaw^{-1})^{1-l}w\\
&=&(awaw^{-1})^{-l}[(awa)(w^{-1}awa)^{-l}w^{-1}]w\\
&=&[(awaw^{-1})^{-l}a]w[a(w^{-1}awa)^{-l}].
\end{eqnarray*}
By Lemma \ref{def}, $\overleftarrow{s}=(awaw^{-1})^{-l}a$. Hence $u=\overleftarrow{s}ws$.

If $m=2l+1$ then similarly
\begin{eqnarray*}
u &=& (awaw^{-1})^{-l-1}(awaw^{-1})^{1-l}w\\
&=&(awaw^{-1})^{-l-1}(awa)(w^{-1}awa)^{-l}\\
&=&(awaw^{-1})^{-l}w(w^{-1}awa)^{-l}.
\end{eqnarray*}
Hence $u=\overleftarrow{s}ws$ where $s=(w^{-1}awa)^{-l}$.
\end{proof}

\begin{proposition} One has $\pi=\la a,w \mid \overleftarrow{r}wr=1 \ra$ where 
$$r=\begin{cases} su^{k-1}awaw^{-1}a^{-1}u^{-k}\quad & \text{if} \quad n=2k,\\
su^kawa^{-1}w^{-1}a^{-1}u^{-k}
\quad & \text{if} \quad n=2k+1.
\end{cases}$$
\label{done}
\end{proposition}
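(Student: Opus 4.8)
The plan is to reduce the claim to a word identity in the free group $F_{a,w}$. Starting from the one-relator presentation obtained just above,
$$\pi=\la a,w\mid u^{n}awa^{-1}w^{-1}a^{-1}=a^{-1}w^{-1}awa\,u^{n-1}\ra,$$
I read off the relator $R:=u^{n}awa^{-1}w^{-1}a^{-1}u^{1-n}a^{-1}w^{-1}a^{-1}wa$. Since the normal closure of a word in a free group coincides with the normal closure of any conjugate of it and of its inverse, it is enough to show that $\overleftarrow{r}wr$ is conjugate in $F_{a,w}$ to $R$ or to $R^{-1}$; then $\la a,w\mid\overleftarrow{r}wr=1\ra=\la a,w\mid R=1\ra=\pi$.

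First I would compute $\overleftarrow{r}$ in the two cases. Applying Lemma \ref{def} together with the fact, from Lemma \ref{u}, that $u$ — hence every power of $u$ — is palindromic, one gets $\overleftarrow{r}=u^{-k}(a^{-1}w^{-1}awa)u^{k-1}\overleftarrow{s}$ if $n=2k$, and $\overleftarrow{r}=u^{-k}(a^{-1}w^{-1}a^{-1}wa)u^{k}\overleftarrow{s}$ if $n=2k+1$. Then I would multiply out $\overleftarrow{r}\cdot w\cdot r$. In both cases the three middle letters form the block $\overleftarrow{s}\,w\,s$, which is exactly $u$ by Lemma \ref{u}; this fuses the two surrounding powers of $u$ and collapses the product to
$$\overleftarrow{r}wr=u^{-k}(a^{-1}w^{-1}awa)\,u^{\,n-1}\,(awaw^{-1}a^{-1})\,u^{-k}\qquad\text{if } n=2k,$$
$$\overleftarrow{r}wr=u^{-k}(a^{-1}w^{-1}a^{-1}wa)\,u^{\,n}\,(awa^{-1}w^{-1}a^{-1})\,u^{-k}\qquad\text{if } n=2k+1,$$
where I have substituted $2k-1=n-1$, resp.\ $2k+1=n$.

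Finally I would conjugate each expression by $u^{k}$ and match it with $R^{\pm1}$. For $n=2k$, using $2k=n$, conjugation by $u^{k}$ yields $(a^{-1}w^{-1}awa)u^{n-1}(awaw^{-1}a^{-1})u^{-n}$; computing $R^{-1}$ directly shows this is exactly $R^{-1}$. For $n=2k+1$, using $2k=n-1$, conjugation by $u^{k}$ yields $(a^{-1}w^{-1}a^{-1}wa)u^{n}(awa^{-1}w^{-1}a^{-1})u^{1-n}$, which is the cyclic rearrangement of $R$ obtained by moving its last block $a^{-1}w^{-1}a^{-1}wa$ to the front, hence a conjugate of $R$. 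In either case $\overleftarrow{r}wr$ is conjugate to $R^{\pm1}$ in $F_{a,w}$, which proves the proposition.

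I do not expect a genuine obstacle here: the whole argument is a bounded word computation. The only delicate point is careful bookkeeping across the two parities of $n$, and in particular keeping the reversal operator $\overleftarrow{(\cdot)}$ distinct from inversion $(\cdot)^{-1}$ — for instance $\overleftarrow{awaw^{-1}a^{-1}}=a^{-1}w^{-1}awa$ whereas $(awaw^{-1}a^{-1})^{-1}=awa^{-1}w^{-1}a^{-1}$ — and then correctly identifying which cyclic conjugate of $R$ or of $R^{-1}$ equals the collapsed word.
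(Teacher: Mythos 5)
Your proposal is correct and is essentially the paper's argument run in the opposite direction: the paper conjugates the relation $u^{n}awa^{-1}w^{-1}a^{-1}=a^{-1}w^{-1}awau^{n-1}$ by $u^{\mp k}$ to rewrite it as $\overleftarrow{r}wr=1$ via Lemma \ref{u}, while you expand $\overleftarrow{r}wr$ via Lemma \ref{u} and match it, up to conjugation and inversion in $F_{a,w}$, with the relator. The word computations (including the reversal versus inversion bookkeeping) check out, so no gap.
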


\begin{proof}
Recall that $\pi=\la a, w \mid u^{n}awa^{-1}w^{-1}a^{-1}=a^{-1}w^{-1}awau^{n-1} \ra.$ 

If $n=2k$ then the relation in $\pi$ is $u^{2k}awa^{-1}w^{-1}a^{-1}=a^{-1}w^{-1}awau^{2k-1}$, which is equivalent to
$u^{-k}a^{-1}w^{-1}awau^{2k-1}awaw^{-1}a^{-1}u^{-k}=1.$ 

If $n=2k+1$ then the relation in $\pi$ is $u^{2k+1}awa^{-1}w^{-1}a^{-1}=a^{-1}w^{-1}awau^{2k}$, which is equivalent to
$u^{-k}a^{-1}w^{-1}a^{-1}wau^{2k+1}awa^{-1}w^{-1}a^{-1}u^{-k}=1.$  

The proposition then follows from Lemma \ref{u}.
\end{proof}

We now complete the proof of Theorem \ref{pretzelknot}. Proposition \ref{done} and Theorem \ref{odd} imply that the universal character of $\pi$ is the quotient of the polynomial ring $\BC[x,y,z]$ by the ideal generated by the two polynomials $P_{\overleftarrow{r}}-P_{r^{-1}w^{-1}}$ and $P_{\overleftarrow{r}a}-P_{r^{-1}w^{-1}a}.$

\subsection{Proof of Theorem \ref{character} }

Let $V$ be the character variety of the $(-2,3,2n+1)$-pretzel knot. Then by Theorem \ref{aj}, $V$ is the zero locus of the two polynomials $Q$ and $R_n$, where
\begin{eqnarray*} 
Q &=& x-xy+(x^2+y^2-3)z-xyz^2+z^3,\\
R_n &=& (y+2)S_{n-2}(y)-(y^2+y-2)S_{n-3}(y)-S_{n-2}(y)x^2\\
&& + \, ((y-1)S_{n-2}(y)+yS_{n-3}(y))xz-(S_{n-2}(y)+S_{n-3}(y))z^2.
\end{eqnarray*} 

It is known that 3-strand pretzel knots are small knots (see \cite{Oe}), hence by \cite{CS} their character varieties have irreducible components of dimension 1 only. Therefore, all irreducible components of $V$ have dimension exactly 1.

Note that if $n=0,\,1$ or $2$ then the $(-2,3,2n+1)$-pretzel knot is a torus knot, otherwise it is hyperbolic. From now on we suppose that $n \not= 0,\,1,\,2$. 

\begin{lemma}
Suppose $\gcd(2n+1,3)=1$. Then $z \not= 0$ on $V$ except a finite number of points.
\label{z=0}
\end{lemma}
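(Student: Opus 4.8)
The plan is to show that the subvariety $\{z = 0\} \cap V$ is zero-dimensional (equivalently finite) by examining what the two defining equations $Q$ and $R_n$ become on the hyperplane $z = 0$. Setting $z = 0$ in the formula for $Q$ gives $Q|_{z=0} = x - xy = x(1-y)$, so on $\{z=0\} \cap V$ one must have $x = 0$ or $y = 1$. I would treat these two cases separately and show each contributes only finitely many points, and the hypothesis $\gcd(2n+1,3) = 1$ will be used precisely to rule out a whole curve's worth of solutions.

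First I would handle $y = 1$. When $y = 1$, the Chebyshev recursion specializes: one computes $S_k(1)$ is periodic in $k$ with period $6$ (the values cycle through $1, 1, 0, -1, -1, 0$), so $S_{n-2}(1)$ and $S_{n-3}(1)$ are explicit constants depending only on $n \bmod 6$. Substituting $y = 1$ and $z = 0$ into $R_n$ leaves $R_n|_{y=1,z=0} = \big((1+2)S_{n-2}(1) - (1+1-2)S_{n-3}(1)\big) - S_{n-2}(1)\,x^2 = 3\,S_{n-2}(1) - S_{n-2}(1)\,x^2 = S_{n-2}(1)\,(3 - x^2)$. Now $S_{n-2}(1) = 0$ exactly when $n - 2 \equiv 2 \pmod 3$, i.e. $n \equiv 1 \pmod 3$, which is equivalent to $2n+1 \equiv 0 \pmod 3$; the hypothesis $\gcd(2n+1,3) = 1$ rules this out, so $S_{n-2}(1) \neq 0$ and we get $x^2 = 3$. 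Thus the slice $\{y = 1, z = 0\} \cap V$ consists of at most the two points $(\pm\sqrt 3, 1, 0)$.

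Next I would handle $x = 0$ (with $z = 0$, $y$ free a priori). Substituting $x = z = 0$ into $R_n$ gives $R_n|_{x=0,z=0} = (y+2)S_{n-2}(y) - (y^2 + y - 2)S_{n-3}(y) = (y+2)\big(S_{n-2}(y) - (y-1)S_{n-3}(y)\big)$. Using the recursion $S_{n-2}(y) = y S_{n-3}(y) - S_{n-4}(y)$, the bracketed factor becomes $S_{n-3}(y) - S_{n-4}(y)$, which is a nonzero polynomial in $y$ (of degree $n-3$), so $R_n|_{x=0,z=0}$ is a nonzero polynomial in $y$ with finitely many roots. Hence $\{x = 0, z = 0\} \cap V$ is finite. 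Combining the two cases, $\{z = 0\} \cap V$ is finite, which is the claim.

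The main obstacle I anticipate is purely bookkeeping: getting the substitutions into $R_n$ exactly right (there are several sign and coefficient conventions in play, and the excerpt even gives two slightly different-looking expressions for $R_n$), and being careful that the factor one isolates — e.g. $S_{n-2}(1)$ in the $y=1$ case, or $S_{n-3}(y) - S_{n-4}(y)$ in the $x=0$ case — is genuinely nonzero. The conceptual content is entirely in the $y=1$ branch, where the congruence condition on $2n+1$ modulo $3$ enters; everything else is elementary. I would double-check the $y=1$ computation by verifying $S_k(1)$ has the claimed period-$6$ pattern directly from $S_{k+1}(1) = S_k(1) - S_{k-1}(1)$ and the initial data $S_0(1) = 1$, $S_1(1) = 1$.
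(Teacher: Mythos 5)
Your proposal is correct and follows essentially the same route as the paper: restrict to $z=0$, split into the cases $x=0$ and $y=1$ via $Q|_{z=0}=x(1-y)$, use the periodicity of $S_k(1)$ together with $\gcd(2n+1,3)=1$ in the $y=1$ branch, and show the restricted $R_n$ is a nonzero polynomial in $y$ in the $x=0$ branch. The only caveat is your justification ``nonzero of degree $n-3$'' for $S_{n-3}(y)-S_{n-4}(y)$, which presumes $n$ large enough; since $n$ may be negative here, it is cleaner to check nonvanishing by evaluating at $y=2$ (where $S_k(2)=k+1$ gives the value $4$), which is exactly what the paper does.
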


\begin{proof}
Fix $z=0$. Then $Q=x(1-y)$ and $$R_n=(y+2)S_{n-2}(y)-(y^2+y-2)S_{n-3}(y)-S_{n-2}(y)x^2.$$ Note that $S_k(2)=k+1$ for all intergers $k$.

If $x=0$ then $R_n=p(y)$ where $p(y):=(y+2)S_{n-2}(y)-(y^2+y-2)S_{n-3}(y)$. Note that $p(2)=4S_{n-2}(2)-4S_{n-3}(2)=4(n-1)-4(n-2)=4$, hence $p(y)$ is a non-zero polynomial in $y$ and so it has a finite number of roots.

If $y=1$ then $R_n=(3-x^2)S_{n-2}(y)$. Note that $S_{3k+2}(1)=0,~S_{3k}(1)=S_{3k+1}(1)=(-1)^k$. Since $\gcd(2n+1,3)=1$, we have $S_{n-2}(y)=S_{n-2}(1)=\pm 1$. Hence $R_n=0$ if and only if $x=\pm \sqrt{3}$. The lemma follows.
\end{proof}

By Lemma \ref{z=0}, we separate the proof of Theorem \ref{character} into 2 cases: $\gcd(2n+1,3)=1$ and $\gcd(2n+1,3)=3$.

\subsection{The case $\gcd(2n+1,3)=1$} Then by Lemma \ref{z=0}, $z \not= 0$ on $V$ except a finite number of points. Without loss of generality, we may suppose $z \not= 0$ on $V$. Let $Q'=Qz^{-1}$ and $R'_n=R_n+S_{n-2}(y)Q'$. Then we have
\begin{eqnarray*}
Q' = x^2-( yz^2+y-1)z^{-1}x+y^2+z^2-3, \quad \text{and} \quad R'_n = -\alpha z^{-1}x+\beta,
\end{eqnarray*}
where 
\begin{eqnarray*}
\alpha &=& (z^2+y-1)S_{n-2}(y)-yz^2S_{n-3}(y), \\
\beta &=& (y^2+y-1)S_{n-2}(y)-(y^2+y-2+z^2)S_{n-3}(y).
\end{eqnarray*}

To proceed, we will need the following lemma.

\begin{lemma}
One has $\alpha \not=0$ on $V  \cap \{z \not= 0\}$ except a finite number of points.
\label{alpha=0}
\end{lemma}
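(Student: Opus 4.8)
The plan is to show that $\alpha$ is not identically zero on the curve $V \cap \{z \neq 0\}$ by exhibiting a single point (or a one-parameter slice) of $V$ at which $\alpha \neq 0$; since every irreducible component of $V$ has dimension $1$, an identically-vanishing $\alpha$ would force $\alpha = 0$ on a whole component, so it suffices to rule this out component by component, or more simply to observe that the zero locus of $\alpha$ inside the curve is a proper closed subset hence finite. Concretely, I would specialize $y$ to a convenient value and compare $\alpha$ against the defining equations. The natural choice is $y = 2$, where the Chebyshev values are explicit: $S_k(2) = k+1$, so that $\alpha|_{y=2} = (z^2+1)(n-1) - 2z^2(n-2) = (n-1) - (n-3)z^2$, which is a nonzero polynomial in $z$ (its leading coefficient $-(n-3)$ is nonzero precisely because $n \neq 3$, and when $n = 3$ it equals $2 \neq 0$).

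The remaining point is to check that the slice $\{y = 2\} \cap V$ is nonempty and not entirely contained in $\{\alpha = 0\}$. On $\{y = 2, z \neq 0\}$ the equation $Q' = 0$ reads $x^2 - (2z^2+1)z^{-1}x + z^2 + 1 = 0$, which for generic $z$ has solutions $x$, and then $R'_n = 0$ becomes the linear equation $\beta = \alpha z^{-1} x$; substituting the explicit forms of $\alpha$ and $\beta$ at $y = 2$ gives a polynomial relation in $z$ alone (after eliminating $x$ via $Q'$), whose solution set is finite but nonempty — so $V \cap \{y = 2, z \neq 0\}$ consists of finitely many points. If $\alpha$ vanished identically on $V \cap \{z \neq 0\}$ it would in particular vanish at all these points; but I must argue that the whole curve is not hidden inside finitely many values of $y$. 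Here is the cleaner route: $V \cap \{z \neq 0\}$ is a one-dimensional variety, and the function $y$ is nonconstant on each of its components (a component on which $y$ were constant would, together with $Q' = R'_n = 0$, be cut out by equations in $x, z$ only for one fixed $y$ — one checks $Q'$ and $R'_n$ then have only finitely many common zeros, contradicting dimension $1$). Hence on each component $y$ takes infinitely many values, in particular values near $y = 2$ where $\alpha \neq 0$, unless $\alpha$ vanishes identically there; but $\alpha$, viewed as a polynomial, does not vanish when $y = 2$ by the computation above. Therefore $\alpha \not\equiv 0$ on any component, so $\{\alpha = 0\} \cap V \cap \{z \neq 0\}$ is a proper closed subvariety of a curve, hence finite.

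The main obstacle I anticipate is the bookkeeping needed to justify that $y$ is nonconstant on every component of $V \cap \{z \neq 0\}$ — equivalently, that there is no "horizontal" component $\{y = c\}$. One must verify that for each fixed value $c$ of $y$ the pair $Q'(x, c, z), R'_n(x, c, z)$ has only finitely many common zeros; since $Q'$ has degree $2$ in $x$ and $R'_n$ is linear in $x$, one eliminates $x$ and is left with a single polynomial in $z$, which is nonzero as long as the coefficients $\alpha, \beta$ are not simultaneously identically zero in $z$ for that value $c$ — and this can fail only for finitely many exceptional $c$, which are harmless. An alternative, perhaps slicker, approach avoiding the component analysis altogether: directly compute a resultant $\mathrm{Res}_x(Q', R'_n)$, a polynomial in $y, z$, show it is not divisible by $\alpha$ (again by specializing $y = 2$), and conclude that the common zeros of $Q', R'_n, \alpha$ satisfy an extra equation independent of the curve, hence are finite in number. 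I would present whichever is shorter after doing the computation, but I expect the $y = 2$ specialization to be the decisive and essentially the only nontrivial step.
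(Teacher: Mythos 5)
Your central computation --- that $\alpha|_{y=2}=(n-1)-(n-3)z^2$ is a nonzero polynomial in $z$ --- only shows that $\alpha$ is not the zero polynomial of $\BC[y,z]$, and that is not where the difficulty lies. The zero set of $\alpha$ in $(x,y,z)$-space is a two-dimensional surface, so nothing you have said prevents an entire one-dimensional component $C$ of $V\cap\{z\neq 0\}$ from lying inside it, even when $y$ is nonconstant on $C$: at points of $C$ where $y$ is near $2$ the coordinate $z$ moves as well, and $C$ could simply follow a branch of the plane curve $\{\alpha(y,z)=0\}$ (which does contain points with $y=2$, namely $z^2=(n-1)/(n-3)$). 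Hence the step ``$y$ takes values near $2$ where $\alpha\neq 0$'' is a non sequitur, and the conclusion ``$\alpha\not\equiv 0$ on any component'' does not follow. The actual content of the lemma is a coprimality statement you never establish: if $\alpha\equiv 0$ on such a $C$, then, since $R'_n=-\alpha z^{-1}x+\beta$ vanishes on $V$ and $z\neq 0$ there, also $\beta\equiv 0$ on $C$, so the plane curves $\alpha=0$ and $\beta=0$ would have to share a one-dimensional component. The paper excludes this by eliminating $z^2$ from the system $\alpha=\beta=0$ to obtain the polynomial $q(y)$ of \eqref{eqn}, checking $q(2)=3n-11\neq 0$, and then using Lemma \ref{hom} to see that each of the finitely many roots $y$ admits at most two values of $z$ (and $Q'$ then gives at most two values of $x$). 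Some elimination of this kind is unavoidable, and it is exactly the ingredient missing from your proposal.

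Your two fallback remarks do not repair the gap. First, declaring the exceptional values $c$ with $\alpha(c,\cdot)\equiv\beta(c,\cdot)\equiv 0$ ``harmless'' is backwards: such a $c$ would yield the curve $\{y=c,\,Q'=0\}\subset V$ on which $\alpha$ vanishes identically, i.e.\ precisely a counterexample to the lemma; these $c$ must be ruled out (this can be done: writing $\alpha=(y-1)S_{n-2}(y)-z^2S_{n-4}(y)$ and $\beta=(y^2+y-1)S_{n-2}(y)-(y^2+y-2)S_{n-3}(y)-z^2S_{n-3}(y)$, identical vanishing in $z$ forces $S_{n-4}(c)=S_{n-3}(c)=0$, contradicting Lemma \ref{hom}). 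Second, the resultant alternative fails as sketched: $z^2\,\mathrm{Res}_x(Q',R'_n)=z^2\beta^2-(yz^2+y-1)\alpha\beta+(y^2+z^2-3)\alpha^2$ lies in the ideal generated by $\alpha$ and $\beta$, so it vanishes on all of $\{\alpha=\beta=0\}$ automatically; to get finiteness you would need $\gcd(\alpha,\mathrm{Res}_x(Q',R'_n))=1$, which modulo $\alpha$ is again $\gcd(\alpha,\beta)=1$, and checking non-divisibility on the single slice $y=2$ cannot detect a common factor depending only on $y$, or one whose specialization at $y=2$ is a nonzero constant. In short, both routes reduce to the coprimality of $\alpha$ and $\beta$, which is what the paper's proof computes and your $y=2$ specialization does not.
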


\begin{proof}
Assume $\alpha =0$ on $V \cap \{z \not= 0\}$. Then $\alpha = \beta =0$, which implies that $(y-1)S_{n-2}(y)= (yS_{n-3}(y)-S_{n-2}(y))z^2$ and $(y^2+y-1)S_{n-2}(y)-(y^2+y-2)S_{n-3}(y)=z^2S_{n-3}(y)$. Hence 
\begin{equation}
(yS_{n-3}(y)-S_{n-2}(y))[(y^2+y-1)S_{n-2}(y)-(y^2+y-2)S_{n-3}(y)]-(y-1)S_{n-2}(y)S_{n-3}(y)=0.
\label{eqn}
\end{equation}

Let $q(y)$ be the left-hand side of the equation \eqref{eqn}. Note that $q(2)=(n-3)(n+3)-(n-1)(n-2)=3n-11 \not= 0$. It implies that $q(y)$ is a non-zero polynomial in $y$ and so it has a finite number of roots. For each root $y$ of $q(y)$, the system $(y-1)S_{n-2}(y)= (yS_{n-3}(y)-S_{n-2}(y))z^2$ and $(y^2+y-1)S_{n-2}(y)-(y^2+y-2)S_{n-3}(y)=z^2S_{n-3}(y)$ has at most 2 solutions $z$, since either $yS_{n-3}(y)-S_{n-2}(y)$ or $S_{n-3}(y)$ is non-zero (by Lemma \ref{hom}). For each solution $(y,z)$ of the system $\alpha=\beta=0$, the equation $Q'=0$ has at most 2 solutions $x$. Therefore $\alpha \not=0$ on $V  \cap \{z \not= 0\}$ except a finite number of points.
\end{proof}

Since $\gcd(2n+1,3)=1$, by Lemmas \ref{z=0} and \ref{alpha=0} we may assume that $\alpha \not=0$ and $z \not= 0$ on $V$. The equation $R'_n=0$ is then equivalent to $x=\frac{z\beta}{\alpha}$. Hence
\begin{eqnarray*}
\alpha^2 Q' &=& z^2\beta ^2-( yz^2+y-1)\beta \alpha+ (y^2+z^2-3)\alpha ^2\\
&=& (-2 + 3 y - y^3 + z^2) \big\{ S_{n-2}(y)^2-(y-1)S_{n-2}(y)S_{n-3}(y)\\
&& - \, (3S_{n-2}(y)^2-(2y+1)S_{n-2}(y)S_{n-3}(y)+2S_{n-3}(y)^2)z^2\\
&& + \, (S_{n-2}(y)^2-yS_{n-2}(y)S_{n-3}(y)+S_{n-3}(y)^2)z^4 \big\}.
\end{eqnarray*}
By Lemma \ref{hom}, we have $S_{n-2}(y)^2-yS_{n-2}(y)S_{n-3}(y)+S_{n-3}(y)^2=1$. It follows that 
\begin{eqnarray*}
\alpha^2 Q'&=& (-2 + 3 y - y^3 + z^2) \big\{ 1+S_{n-2}(y)S_{n-3}(y)-S_{n-3}(y)^2\\
&& - \, (2+S_{n-2}(y)^2-S_{n-2}(y)S_{n-3}(y))z^2+z^4 \big\}\\
&=& (-2 + 3 y - y^3 + z^2)T(y,z)
\end{eqnarray*}
where 
\begin{eqnarray*}
T(y,z) &:=& t_0(y)+t_2(y)z^2+z^4,\\
t_0(y) &:=& 1+S_{n-2}(y)S_{n-3}(y)-S_{n-3}(y)^2,\\
t_2(y) &:=& -(2+S_{n-2}(y)^2-S_{n-2}(y)S_{n-3}(y)).
\end{eqnarray*} 

\begin{lemma}
Suppose $n \not= 1,\,2$. Then $t_0(y) \in \BC[y]$ is a polynomial of positive degree and it does not have any repeated factors. 
\label{E}
\end{lemma}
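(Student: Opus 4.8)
The plan is to factor $t_0$ as a product of two Chebyshev-type polynomials, to show that each factor is squarefree, and to show that the two factors are coprime; coprimality is where the standing hypothesis $\gcd(2n+1,3)=1$ of this subsection is used, and some such hypothesis is genuinely needed --- for instance $t_0=(y-1)^2(y+1)$ when $n=4$, where $3\mid 2n+1$.

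The first step is to rewrite $t_0$. Replacing the constant $1$ by means of Lemma \ref{hom} with $k=n-2$, and then using the recurrence $S_{n-2}=yS_{n-3}-S_{n-4}$, one obtains
$$t_0 \;=\; S_{n-2}^2-(y-1)S_{n-2}S_{n-3} \;=\; S_{n-2}\bigl(S_{n-2}-(y-1)S_{n-3}\bigr) \;=\; S_{n-2}(y)\bigl(S_{n-3}(y)-S_{n-4}(y)\bigr).$$
For $n\ge 3$ we have $\deg S_{n-2}=n-2\ge 1$ and $S_{n-3}-S_{n-4}\ne 0$, so $\deg t_0>0$; the remaining values $n\le 0$ are treated identically after applying $S_{-m-2}=-S_m$, the only degenerate cases being $n=1$ (where $S_{n-2}=S_{-1}=0$) and $n=2$ (where $t_0\equiv 1$), both excluded by hypothesis.

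It therefore suffices to prove: (a) $S_{n-2}$ is squarefree; (b) $S_{n-3}-S_{n-4}$ is squarefree; (c) $\gcd\bigl(S_{n-2},\,S_{n-3}-S_{n-4}\bigr)=1$; for then $t_0$, a product of two coprime squarefree polynomials, has no repeated factor. Part (a) follows from the well-known fact that $S_k$ has the $k$ distinct roots $2\cos\frac{j\pi}{k+1}$, $1\le j\le k$. For (b), substitute $y=2\cos\theta$ and use $S_k(2\cos\theta)=\sin((k+1)\theta)/\sin\theta$; a sum-to-product computation gives
$$S_k(2\cos\theta)-S_{k-1}(2\cos\theta)\;=\;\frac{\sin((k+1)\theta)-\sin(k\theta)}{\sin\theta}\;=\;\frac{\cos\bigl((2k+1)\theta/2\bigr)}{\cos(\theta/2)},$$
which vanishes on $(0,\pi)$ exactly at $\theta=\frac{(2j-1)\pi}{2k+1}$, $1\le j\le k$, giving $k$ distinct roots of the degree-$k$ polynomial $S_k-S_{k-1}$ (apply this with $k=n-3$; small and negative $n$ reduce to this case or are immediate).

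Finally, for (c), the key identity is
$$S_{n-2}-\bigl(S_{n-3}-S_{n-4}\bigr)\;=\;(y-1)\,S_{n-3}.$$
Since $\gcd(S_{n-2},S_{n-3})=1$ by Lemma \ref{hom}, this identity forces any common factor of $S_{n-2}$ and $S_{n-3}-S_{n-4}$ to divide $y-1$, so it remains only to check that $(y-1)\nmid S_{n-2}$, i.e.\ $S_{n-2}(1)\ne 0$. The integers $S_k(1)$ satisfy $S_{k+1}(1)=S_k(1)-S_{k-1}(1)$ with $S_0(1)=S_1(1)=1$, hence are periodic and vanish exactly when $k\equiv 2\pmod 3$; therefore $(y-1)\mid S_{n-2}$ iff $n-2\equiv 2\pmod 3$, i.e.\ iff $3\mid 2n+1$, which is excluded since we are in the case $\gcd(2n+1,3)=1$. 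Combining (a), (b) and (c) gives the result. I expect step (c) to be the crux: it is the only place the arithmetic hypothesis enters, and it does so through the divisibility $(y-1)\mid S_{n-2}\iff 3\mid 2n+1$; the trigonometric identity in (b), the root count, and the bookkeeping for $n\le 0$ are all routine.
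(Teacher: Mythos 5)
Your argument is correct, and it rests on the same factorization $t_0=S_{n-2}(y)\bigl(S_{n-3}(y)-S_{n-4}(y)\bigr)$ and essentially the same description of the roots as the paper (the paper quotes the cosine product formulas for $S_{n-2}$ and $S_{n-3}-S_{n-4}$ from \cite[Lem 4.13]{LTaj}; your sum-to-product computation simply reproves them). The substantive difference is your step (c). The paper reads squarefreeness directly off the product $\prod_{j=1}^{n-2}\bigl(y-2\cos\frac{j\pi}{n-1}\bigr)\prod_{j=1}^{n-3}\bigl(y-2\cos\frac{(2j-1)\pi}{2n-5}\bigr)$, which tacitly assumes the two sets of roots are disjoint; that disjointness is never checked, and it is exactly the coprimality you establish. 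Moreover it genuinely requires the standing hypothesis of the subsection: the two factors share the root $y=1$ precisely when $n\equiv 1\pmod 3$, i.e. when $3\mid 2n+1$, so under the stated hypothesis $n\ne 1,\,2$ alone the conclusion is false --- your example $n=4$, where $t_0=(y-1)^2(y+1)$, and more generally every $n\equiv 1\pmod 3$ with $n\ge 4$. Your identity $S_{n-2}-(S_{n-3}-S_{n-4})=(y-1)S_{n-3}$, combined with $\gcd(S_{n-2},S_{n-3})=1$ from Lemma \ref{hom} and $S_{n-2}(1)\ne 0$ when $3\nmid 2n+1$, supplies precisely the verification the paper omits; so your proof is both correct and more complete, and it shows the hypothesis $\gcd(2n+1,3)=1$ (in force in this subsection, where the lemma is applied) should really be recorded in the statement. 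The remaining differences are cosmetic: the paper gets positive degree by evaluating $t_0$ at $y=\pm 2$ and using $n\ne 1,\,2$, while you count degrees, and both arguments reduce $n\le 0$ to the nonnegative case via $S_{-k-2}=-S_k$.
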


\begin{proof}
Note that $S_k(2)=k+1$ and $S_k(-2)=(-1)^k(k+1)$ for all integers $k$. It follows that $h_0(y)=1+S_{n-2}(y)S_{n-3}(y)-S_{n-3}(y)^2$ is equal to $n-1$ if $y=2$; and is equal to $(n-1)(5-2n)$ if $y=-2$. Hence $h_0(y) \in \BC[y]$ is a polynomial of positive degree since $n \not= 1,\,2$.

We have $t_0(y)=S_{n-2}(y) \big( S_{n-2}(y)-(y-1)S_{n-3}(y) \big)=S_{n-2}(y)(S_{n-3}(y)-S_{n-4}(y)).$ If $n \ge 4$ then 
$S_{n-2}(y)=\prod_{j=1}^{n-2}(y-2\cos\frac{j\pi}{n-1})$ and $S_{n-3}(y)-S_{n-4}(y)=\prod_{j=1}^{n-3}(y-2\cos\frac{(2j-1)\pi}{2n-5})$ (see, for example, \cite[Lem 4.13]{LTaj}), hence
$$t_0(y)=\prod_{j=1}^{n-2}(y-2\cos\frac{j\pi}{n-1})\prod_{j=1}^{n-3}(y-2\cos\frac{(2j-1)\pi}{2n-5})$$
does not have any repeated factors. 

Similarly, if $n \le -1$ then by letting $n'=-(n+1) \ge 0$, we have
\begin{eqnarray*}
t_0(y) &=& S_{n'+1}(y)(S_{n'+2}(y)-S_{n'+3}(y))\\
&=& -\prod_{j=1}^{n'+1}(y-2\cos\frac{j\pi}{n'+2})\prod_{j=1}^{n'+3}(y-2\cos\frac{(2j-1)\pi}{2n'+7})
\end{eqnarray*}
since $S_{k}(y)=-S_{-k-2}(y)$ for all integers $k$. Hence $h_0(y)$ does not have any repeated factors. If $n=0$ then $t_0(y)=-(y^2-y-1)$. If $n=3$ then $t_0(y)=y$. The lemma follows.
\end{proof}

\begin{proposition}
Suppose $n \not= 0,\,1,\,2$. Then $T(y,z)$ is irreducible in $\BC[y,z]$.
\label{irred}
\end{proposition}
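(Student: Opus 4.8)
The plan is to regard $T(y,z)=t_0(y)+t_2(y)z^2+z^4$ as a monic quartic in $z$ over the field $K=\mathbb{C}(y)$ and prove it is irreducible there. Since $T$ is monic in $z$ it is primitive in $\mathbb{C}[y][z]$ (and has no factor lying in $\mathbb{C}[y]$ alone), so by Gauss's lemma irreducibility over $K$ is equivalent to irreducibility in $\mathbb{C}[y,z]$.

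Next I reduce the shape of a hypothetical factorization. Because $T$ is even in $z$, a root $\zeta\in K$ comes paired with $-\zeta$, and $\zeta\neq 0$ since $T(0)=t_0\neq 0$ ($t_0$ has positive degree by Lemma \ref{E}); hence $z^2-\zeta^2$ divides $T$. So any nontrivial factorization of $T$ over $K$ yields one into two monic quadratics, and matching the vanishing $z^3$-coefficient forces
\[
T=(z^2+bz+c)(z^2-bz+d),\qquad b(d-c)=0,\quad c+d-b^2=t_2,\quad cd=t_0 .
\]
If $b=0$, then $c,d$ are the roots of $X^2-t_2X+t_0$ in $K$, so the discriminant $\Delta:=t_2^2-4t_0$ is a square in $K$; if $b\neq 0$, then $c=d$ and $t_0=c^2$ is a square in $K$. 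The second alternative is impossible: by Lemma \ref{E} the polynomial $t_0\in\mathbb{C}[y]$ has positive degree and no repeated factors, so it is not a perfect square in $\mathbb{C}[y]$, hence (as $\mathbb{C}[y]$ is a UFD) not a square in $\mathbb{C}(y)$. Thus it suffices to show $\Delta$ is not a square in $\mathbb{C}[y]$.

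For this I compute $\Delta$ explicitly. Writing $A=S_{n-2}(y)$ and $B=S_{n-3}(y)$, so that $t_0=1+AB-B^2$ and $t_2=-(2+A^2-AB)$, one expands $t_2^2-4t_0$ and repeatedly substitutes $A^2+B^2=1+yAB$ (Lemma \ref{hom}); together with $1+(y-2)AB=A^2+B^2-2AB=(A-B)^2$, this collapses the expression to
\[
\Delta=(A^2+4)(A-B)^2=\big(S_{n-2}(y)^2+4\big)\big(S_{n-2}(y)-S_{n-3}(y)\big)^2 .
\]
Since the second factor is already a square, it remains only to see that $S_{n-2}(y)^2+4$ is not a perfect square in $\mathbb{C}[y]$ (this also shows $\Delta\neq 0$). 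But $S_{n-2}(y)^2+4=C(y)^2$ would give $(C-S_{n-2})(C+S_{n-2})=4$, forcing $C\pm S_{n-2}$ to be nonzero constants and hence $S_{n-2}(y)$ to be constant; as $S_k(y)$ is constant only for $k\in\{-2,-1,0\}$, this contradicts $n\notin\{0,1,2\}$. Therefore $\Delta$ is not a square, the first alternative is ruled out as well, and $T(y,z)$ is irreducible.

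I expect the computation of $\Delta$ in the third paragraph to be the only real work: one must expand $t_2^2-4t_0$ and recognize that Lemma \ref{hom} forces it to factor as $\big(S_{n-2}^2+4\big)$ times the square $\big(S_{n-2}-S_{n-3}\big)^2$. Everything else — reducing to the two factorization shapes, and the elementary squarefree/degree arguments for $t_0$ and for $S_{n-2}^2+4$ — is routine.
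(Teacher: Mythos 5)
Your proof is correct and takes essentially the same route as the paper: reduce any factorization to a product of two monic quadratics in $z$, kill the conjugate-quadratic case using the squarefreeness of $t_0$ (Lemma \ref{E}), and kill the even case via the identity $t_2^2-4t_0=\big(S_{n-2}^2+4\big)\big(S_{n-2}-S_{n-3}\big)^2$ together with the observation that $S_{n-2}^2+4$ can only be a square if $S_{n-2}$ is constant, impossible for $n\neq 0,1,2$. The only difference is cosmetic: you work over $\BC(y)$ via Gauss's lemma, while the paper manipulates polynomial factors of $T$ directly.
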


\begin{proof}
If $T(y,z)$ has a factor $z+f(y)$ where $f(y) \not\equiv 0$, then $z-f(y)$ is also a factor of $T(y,z)$. Hence $T(y,z)$ has a factor $z^2-f^2(y)$. 

If $T(y,z)$ has a factor $z^2+f(y)z+g(y)$ where $f(y),\,g(y) \not\equiv 0$, then it is easy to see that $z^2-f(y)z+g(y)$ is also a factor of $T(y,z)$. In this case, we have
\begin{eqnarray*}
z^4+h_2(y)z^2+h_0(y) &=&(z^2+f(y)z+g(y))(z^2-f(y)z+g(y))\\
    &=&z^4+(2g(y)-f^2(y))z^2+g^2(y),
\end{eqnarray*}
Hence $g^2(y)=t_0(y)$, which is impossible since $t_0(y) \in \BC[y]$ is a polynomial of positive degree and it does not have any repeated factors, by Lemma \ref{E}. 

Assume that $T(y,z)$ is reducible. Then by the above arguments, we may suppose that $T(y,z)=(z^2+g(y))(z^2+t_2(y)-g(y))$
where $g(y) \not\equiv 0$. In this case $g(y)(t_2(y)-g(y))=t_0(y)$ which implies that $\frac{t_2(y)^2}{4}-t_0(y)=(g(y)-\frac{t_2(y)}{2})^2$. Note that $t_2(y)^2-4t_0(y)=(4+S_{n-2}(y)^2)(S_{n-2}(y)-S_{n-3}(y))^2$. Hence $$4+S_{n-2}(y)^2=(2g(y)-t_2(y))^2/(S_{n-2}(y)-S_{n-3}(y))^2.$$ Let $h(y)=(2g(y)-t_2(y))/(S_{n-2}(y)-S_{n-3}(y)) \in \BC[y]$ then $4+S_{n-2}(y)^2=h(y)^2$, i.e. $(h(y)-S_{n-2}(y))(h(y)+S_{n-2}(y))=4$. It follows that both $h(y)-S_{n-2}(y)$ and $h(y)+S_{n-2}(y)$ are constant polynomials, and so is $S_{n-2}(y)$. This can not occur since $n \not= 0,\,1,\,2$. Therefore $T(y,z)$ is irreducible in $\BC[y,z]$.
\end{proof}

We now complete the proof of Theorem \ref{character}. Note that $T(2,z)=z^4-(n+1)z^2+n-1$ is not divisible by $(-2 + 3 y - y^3 + z^2) \mid_{y=2}=z^2-4$, since $T(2,\pm 2)=11-3n \not=0$. Hence $T(y,z)$ is not divisible by $-2 + 3 y - y^3 + z^2$ and so, by Proposition \ref{irred}, $\alpha^2 Q'=(-2 + 3 y - y^3 + z^2)T(y,z)$ has exactly 2 irreducible factors. Therefore $V$ has exactly 2 irreducible components.

\subsection{The case $\gcd (2n+1,3)=3$} From the proof of Lemma \ref{z=0} and the proof of Theorem \ref{character} for the case $\gcd(2n+1,3)=1$, it is easy to see that in this case $V$ has exactly 3 irreducible components, where one of them is $\{z=0, \, y=1\}$.

\end{document}